\newtheorem{theorem}{Theorem}[section]
\newtheorem{proposition}[theorem]{Proposition}
\newtheorem{corollary}[theorem]{Corollary}
\newtheorem{lemma}[theorem]{Lemma}
\theoremstyle{definition}
\newtheorem{definition}[theorem]{Definition}
\newtheorem{example}[theorem]{Example}
\newtheorem{remark}[theorem]{Remark}
\newtheorem{conjecture}[theorem]{Conjecture}
\newtheorem{claim}[theorem]{Claim}
\theoremstyle{remark}
\newtheorem{examples}[theorem]{Examples}
\newtheorem{question}[theorem]{Question}
\newcommand{\bL}{\mathbf{L}}
\newcommand{\bR}{\mathbf{R}}
\newcommand{\GL}{\widetilde{\textnormal{GL}}^{+}(2,\mathbb{R})}
\newcommand{\CC}{\mathbb{C}}
\newcommand{\PP}{\mathbb{P}}
\newcommand{\RR}{\mathbb{R}}
\newcommand{\ZZ}{\mathbb{Z}}
\newcommand{\NN}{\mathbb{N}}
\newcommand{\QQ}{\mathbb{Q}}
\newcommand{\cA}{\mathcal{A}}
\newcommand{\cB}{\mathcal{B}}
\newcommand{\cD}{\mathcal{D}}
\newcommand{\cE}{\mathcal{E}}
\newcommand{\cT}{\mathcal{T}}
\newcommand{\cF}{\mathcal{F}}
\newcommand{\cM}{\mathcal{M}}
\newcommand{\cO}{\mathcal{O}}
\newcommand{\cP}{\mathcal{P}}
\newcommand{\cQ}{\mathcal{Q}}
\newcommand{\cS}{\mathcal{S}}
\newcommand{\bcla}{\begin{claim}}
\newcommand{\ecla}{\end{claim}}
\newcommand{\bp}{\begin{proposition}}
\newcommand{\ep}{\end{proposition}}
\newcommand{\brem}{\begin{remark}}
\newcommand{\erem}{\end{remark}}
\newcommand{\bd}{\begin{definition}}
\newcommand{\ed}{\end{definition}}
\newcommand{\bl}{\begin{lemma}}
\newcommand{\el}{\end{lemma}}
\newcommand{\bh}{\begin{hecho}}
\newcommand{\eh}{\end{hecho}}
\newcommand{\bq}{\begin{question}}
\newcommand{\eq}{\end{question}}
\newcommand{\bo}{\begin{obs}}
\newcommand{\eo}{\end{obs}}
\newcommand{\bc}{\begin{corollary}}
\newcommand{\ec}{\end{corollary}}
\newcommand{\bcon}{\begin{conjecture}}
\newcommand{\econ}{\end{conjecture}}
\newcommand{\bnot}{\begin{notation}}
\newcommand{\enot}{\end{notation}}
\newcommand{\bdem}{\begin{proof}}
\newcommand{\edem}{\end{proof}}
\newcommand{\benum}{\begin{enumerate}}
\newcommand{\eenum}{\end{enumerate}}
\newcommand{\bitem}{\begin{itemize}}
\newcommand{\eitem}{\end{itemize}}
\newcommand{\bes}{\begin{examples}}
\newcommand{\ees}{\begin{examples}}
\newcommand{\be}{\begin{example}}
\newcommand{\ee}{\end{example}}
\newcommand{\bt}{\begin{theorem}}
\newcommand{\et}{\end{theorem}}
\newcommand{\lin}{\langle}
\newcommand{\rin}{\rangle}
\DeclareMathOperator{\rec}{rec}
\DeclareMathOperator{\Aut}{Aut}
\DeclareMathOperator{\perf}{perf}
\DeclareMathOperator{\Sets}{Sets}
\DeclareMathOperator{\Quot}{Quot}
\DeclareMathOperator{\Sub}{Sub}
\DeclareMathOperator{\Sch}{Sch}
\DeclareMathOperator{\pug}{pug}
\DeclareMathOperator{\Sym}{Sym}
\DeclareMathOperator{\Gpds}{Gpds}
\DeclareMathOperator{\rk}{rk}
\DeclareMathOperator{\QCoh}{QCoh}
\DeclareMathOperator{\Coh}{Coh}
\DeclareMathOperator{\Hom}{Hom}
\DeclareMathOperator{\Ext}{Ext}
\DeclareMathOperator{\Stab}{Stab}
\DeclareMathOperator{\Rep}{Rep}
\DeclareMathOperator{\Mor}{Mor}
\DeclareMathOperator{\op}{op}
\DeclareMathOperator{\gl}{gl}
\DeclareMathOperator{\Pic}{Pic}
\DeclareMathOperator{\rank}{rank}
\title{Moduli of Bridgeland semistable holomorphic triples}
\author{DOMINIC BUNNETT}
\address{Technische Universität Berlin, Straße des 17. Juni 135, \newline \indent Raum 613,
Berlin 10623, Germany}
\email{bunnett@math.tu-berlin.de}
\urladdr{http://page.math.tu-berlin.de/~bunnett/}
\author{ALEJANDRA RINC\'{O}N-HIDALGO}
\address{ICTP, Strada Costiera 11, room 126, 34151 Trieste, Italy.}
\email{arincon@ictp.it}
\urladdr{http://users.ictp.it/~arincon/}
\begin{document}
\maketitle

\begin{abstract}
We prove that the moduli stack of Bridgeland semistable holomorphic triples over a curve of $g(C)\geq 1$ with a fixed numerical class and phase is an algebraic stack of finite type over $\CC$ and admits a proper good moduli space. We prove that this also holds for a class of Bridgeland stability conditions on the category of holomorphic chains $\cT_{C,n}$.

In the process, we construct an explicit geometric realisation of $\cT_{C,n}$ and prove the open heart property for noetherian hearts in admissible categories of $D^b(X)$, where $X$ is a smooth projective variety over $\CC$, whose orthogonal complements are geometric triangulated categories.
\end{abstract}

\section{Introduction}
The purpose of this paper is to study the moduli of Bridgeland semistable holomorphic chains.

Holomorphic chains were first introduced by \'Alvarez-Cons\'ul and Garc\'ia-Prada in \cite{ACGP01}.
An $n$-holomorphic chain is a chain of morphisms
\[E_1 \xrightarrow{\,\,\varphi_1 \,\,} E_2 \xrightarrow{\,\,\varphi_2 \,\,} \cdots \xrightarrow{\varphi_{n-1}} E_{n}\enspace ,\]
where $E_i \in \Coh(X)$ with $X$ a smooth projective variety over $\CC$.
Moduli spaces of holomorphic chains of vector bundles were constructed by Schmitt \cite{S03} using geometric invariant theory (GIT).
These moduli spaces have played an important role in the study of Higgs bundles \cite{BGPG03,GPH13}.

We denote the abelian category of holomorphic chains by $Q_{X,n}$ and its derived category $\cT_{X,n}$.
We also refer to objects of $\cT_{X,n}$ as holomorphic chains.
When $n=2$ we write $\cT_{X} \coloneqq \cT_{X,2}$ and refer to it as the category of holomorphic triples. 

Stability conditions on triangulated categories were introduced by Bridgeland \cite{B07} and play a very important role in algebraic geometry via the study of moduli spaces and wall-crossings.
In \cite{MRRHR20} the second author, Mart\'inez-Romero and R\"uffer completely described the stability manifold of $\cT_C$ for a curve $C$.

The construction of moduli spaces of Bridgeland semistable objects is highly non-trivial.
Building on the work of Lieblich \cite{L05}, Toda carried out this construction for the derived category of a K3 surface $X$ \cite{T08}.
In particular, Toda proved that the moduli space of $\sigma$-semistable objects in $D^b(X)$ is an algebraic stack of finite type over $\CC$ for some $\sigma \in \Stab(X)$.

Toda and Piyaratne conjectured in \cite[Conjecture 1.1]{PT19} that the same holds for the derived category of any smooth projective variety.
This conjecture has been confirmed for K3 surfaces, threefolds satisfying the Bogomolov-Gieseker inequality, and where replaces the derived category with the Kuznetsov component of a cubic fourfold \cite{T08,PT19,BLMNPS20}.
The main result of this paper confirms this conjecture when $D^b(X)$ is replaced by $\cT_C$.

Moduli spaces of objects in $D^b(X)$ were studied by Lieblich in \cite{L05} and by Abramovich and Polishchuk in \cite{AP06}. 
In \cite{BLMNPS20}, motivated by the study of the Kuznetsov component, Bayer, Lahoz, Macr\`i, Nuer, Perry and Stellari studied moduli problems associated to full admissible subcategories of $D^b(X)$ and of relative moduli spaces building on work of Lieblich, Piyaratne, and Toda \cite{L05,T08,PT19}.

In order to construct the moduli space of Bridgeland semistable holomorphic chains, we first need to embed $\cT_{X,n}$ into $D^b(Y_{X,n})$ for a smooth projective variety $Y_{X,n}$.
This is precisely the contents of Theorem \ref{theorem_gr_quivers} which constructs an explicit embedding.
We refer to this embedding by $\cT_{X,n} \hookrightarrow D^b(Y_{X,n})$ as a geometric realisation of $\cT_{X,n}$.

The construction of $Y_{X,n}$ is a generalisation of the work of Orlov \cite{O15}, the key difference being that semiorthogonal decompositions are needed in place of strong exceptional collections. 
In constructing the embedding we characterise $\cT_{X,n}$ via its semiorthogonal components and the associated gluing functor.
Gluing semiorthogonal components (as in \cite{KL15}) requires that one works on the level of dg-categories.

To understand families of objects in $\cT_{X,n}$ we look to the base change
\[\left(\cT_{X,n}\right)_S \subset D^b(Y_{X,n} \times S)\enspace ,\]
where $S$ is a base scheme, defined by Kuznetsov \cite{K11}.
We then consider local t-structures \cite{BLMNPS20} on admissible subcategories in $D^b(X)$ (so-called sheaves of t-structures by Abramovich and Polishchuk \cite{AP06}).

As laid out in \cite[Section 6.1]{AP06}, to construct moduli spaces of Bridgeland semistable objects with respect to an algebraic stability conditions three problems remain:
\begin{enumerate}
    \item the \emph{generic flatness property};
    \item the \emph{open heart property}, and;
    \item the \emph{boundedness} of semistable objects of fixed type $\beta$ and phase $\phi.$ 
\end{enumerate}

Let us first address boundedness.
If boundedness of a moduli space associated to a stability condition can be shown to hold, then the same is true for any stability condition in its connected component.
Thus, via the well-studied GIT moduli space of holomorphic chains of vector bundles \cite{S03}, we can conclude boundedness for the connected component containing GIT-stability conditions.
Moreover, by \cite[Theorem 1.1]{MRRHR20}, boundedness follows for the entire stability manifold when $n=2$.

For a noetherian heart in $D^b(X)$ the `open heart property' (Definition \ref{def:OHC}) is already known \cite[Proposition 3.3.2]{AP06}.
In the case of $\cT_{X,n}$, we appeal to the structure inherited from the geometric realisation.
For a noetherian heart $\cA\subseteq \cT_{X,n},$ our approach is to pass down properties already proved for hearts in $D^b(X).$
To this end, we construct a bigger noetherian heart via recollement (as in \cite{BBD}) $\widetilde{\cA}\subseteq D^b(Y_{X,n})$ containing $\cA.$ 
In Proposition \ref{prop:rec_base_change}, we prove in a general setting, that being a recollement heart is stable under base change.

More generally, we use the same strategy to prove the open heart property for a heart $\cA$ in an admissible subcategory $\cT \subseteq D^b(X)$ under the condition that $\cT^{\perp}$ is geometric (see Remark \ref{rmk:OHC-general}).

We first prove the generic flatness property (Definition \ref{def:GF}) for algebraic stability conditions constructed via gluing (Remark \ref{Rem_filtrationCP}).
By adapting the results of \cite{T08}, we can then extend this to any algebraic stability condition in the $\GL$-orbit of a gluing stability condition.
Furthermore, given that the support property is satisfied for triples, we obtain the following theorem.

\begin{theorem}[Theorem \ref{thm_main1} and \ref{thm:good-moduli}]
Let $\sigma\in \Stab(\cT_{C})$ be a stability condition.
The stack $\cM^{\beta, \phi}(\sigma)$ is an algebraic stack of finite type over $\CC$ admitting a proper good moduli space.
\end{theorem}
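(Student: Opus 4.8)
The plan is to assemble the theorem from the three pillars laid out in Section 6.1 of \cite{AP06}: generic flatness, the open heart property, and boundedness. By the discussion in the introduction, each of these is available for $\cT_C$. Concretely, first I would fix a stability condition $\sigma \in \Stab(\cT_C)$; since $\cT_C$ is a geometric triangulated category (via the realisation $\cT_C \hookrightarrow D^b(Y_{C,2})$ of Theorem \ref{theorem_gr_quivers}) and the stability manifold is described in \cite{MRRHR20}, we may reduce to an algebraic stability condition. Indeed, the key reduction is that algebraicity and the support property hold on a dense subset and are $\GL$-equivariant, and — crucially — by \cite[Theorem 1.1]{MRRHR20} the stability manifold of $\cT_C$ is connected, so boundedness (known for the GIT chamber via Schmitt \cite{S03}) propagates to all of $\Stab(\cT_C)$. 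The open heart property for the relevant noetherian heart $\cA \subseteq \cT_C$ follows by the recollement strategy: build $\widetilde{\cA} \subseteq D^b(Y_{C,2})$ containing $\cA$, invoke \cite[Proposition 3.3.2]{AP06} for the ambient heart, and use Proposition \ref{prop:rec_base_change} to see the recollement structure — hence the open heart property — is stable under base change. Generic flatness is first established for gluing stability conditions (Remark \ref{Rem_filtrationCP}) and then transported along the $\GL$-orbit by adapting \cite{T08}.

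With the three properties in hand, the Abramovich–Polishchuk machinery \cite[Section 6.1]{AP06} produces the moduli stack $\cM^{\beta,\phi}(\sigma)$ of $\sigma$-semistable objects of class $\beta$ and phase $\phi$ as an Artin stack of finite type over $\CC$. Here one works inside the base-changed category $(\cT_C)_S \subset D^b(Y_{C,2} \times S)$ of Kuznetsov \cite{K11}, using the sheaf-of-t-structures / local t-structure formalism of \cite{AP06, BLMNPS20} to make sense of families. Boundedness gives finite type; generic flatness and the open heart property give that the relevant substack is open and that semistability is an open condition in families. This is essentially Theorem \ref{thm_main1}.

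For the good moduli space (Theorem \ref{thm:good-moduli}), I would verify the hypotheses of Alper–Halpern-Leistner–Heinloth's existence criterion: $\cM^{\beta,\phi}(\sigma)$ has affine diagonal and is of finite type, so it suffices to check the valuative criteria $\Theta$-reductivity and $S$-completeness. These follow from the abelian (heart) structure: semistable objects of fixed phase form an abelian category of finite length, so Harder–Narasimhan and Jordan–Hölder filtrations exist, giving the limits required by the valuative criteria. Properness of the good moduli space then follows from a valuative criterion for the stack itself — using the HN/JH filtration argument to produce limits of flat families of semistable objects — together with separatedness from uniqueness of the associated graded. Again the support property, available for triples, is what guarantees the Harder–Narasimhan filtrations are finite and the numerical class of the moduli problem is well-behaved.

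The main obstacle I anticipate is the open heart property for $\cA \subseteq \cT_C$: unlike the case of a heart in $D^b(X)$ itself, one cannot directly quote \cite[Proposition 3.3.2]{AP06}, and the recollement heart $\widetilde{\cA} \subseteq D^b(Y_{C,2})$ must be constructed carefully so that (i) it is noetherian, (ii) it genuinely contains $\cA$ as the image of an exact functor, and (iii) its formation commutes with the base change $S$. Establishing (iii) — that being a recollement heart is stable under base change (Proposition \ref{prop:rec_base_change}) — is the technical heart of the argument, since the base-changed category $(\cT_C)_S$ is defined via Kuznetsov's gluing and one must check the glued t-structure on $D^b(Y_{C,2} \times S)$ restricts correctly. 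A secondary difficulty is transporting generic flatness from gluing stability conditions to the full $\GL$-orbit, where one adapts \cite{T08} but must control the interaction of the $\GL$-action with the semiorthogonal decomposition of $\cT_C$.
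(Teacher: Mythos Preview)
Your outline matches the paper's strategy almost exactly for the algebraicity and finite-type claims: reduce to algebraic $\sigma$, use the recollement heart $\widetilde{\cA}\subseteq D^b(Y_{C,2})$ to inherit the open heart property from \cite[Proposition 3.3.2]{AP06} via Proposition \ref{prop:rec_base_change}, get generic flatness first for gluing hearts and then along the $\GL$-orbit \`a la \cite{T08}, and conclude boundedness from connectedness of $\Stab(\cT_C)$ together with Schmitt's GIT moduli. The paper then deduces openness of the semistable locus in $\cM_{\pug}(\cT_C)$ (Lemma \ref{lem_3.13T}) and finishes with \cite[Lemma 9.7]{BLMNPS20}; your invocation of \cite[\S6.1]{AP06} amounts to the same thing. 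The passage to non-algebraic $\sigma$ via the wall-and-chamber structure is also what the paper does.

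The one place your sketch diverges is the properness argument. You propose to obtain the existence half of the valuative criterion by ``HN/JH filtration arguments to produce limits of flat families''; but for semistable objects of a fixed phase there is no HN filtration to exploit, and JH alone does not manufacture an extension across the puncture. What is actually needed is an analogue of \cite[Proposition 4.1.1]{AP06} for $\cA\subseteq\cT_C$, and the paper obtains it (Proposition \ref{prop:val-crit}) by \emph{re-using the recollement trick}: embed $E_U\in\cA_U$ into $\widetilde{\cA}_U$, extend to $E_0\in\widetilde{\cA}_S$ by \cite[Lemma 3.2.1]{AP06}, project back to $(\cT_{C})_S$ using Proposition \ref{prop:rec_base_change}, and then kill the $S$-torsion supported at $p$ to get a $t$-flat object whose fibres remain in $\mathcal{P}_\sigma(1)$ by the argument of \cite[Lemma 4.1.2]{AP06}. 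With this in hand, \cite[Theorem 7.25 and Theorem A]{AHLH19} give the separated, then proper, good moduli space. So your plan is right in spirit, but for properness you should invoke the same embedding-into-$D^b(Y_{C,2})$ mechanism you already used for the open heart property, rather than HN/JH.
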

Further, we obtain a partial result in the case of holomorphic chains in Proposition \ref{prop_algstack_chains} and Theorem \ref{thm:good-moduli}.

For holomorphic triples, one has a complete picture of the wall and chamber structure in the classical situation \cite{S18}.
By studying the wall and chamber decomposition of the stability manifold, we get a full picture of the moduli of holomorphic triples.
We expect that the moduli spaces are projective and their that their birational geometry is dictated by the wall and chamber structure - this will be pursued elsewhere.

\subsection*{Layout}
The layout of the paper is as follows.
In Section 2, we provide preliminaries, fixing notation and providing results central to this paper.
In Section 3, we study geometric realisations of $\cT_{X,n}$. This involves first studying the structure of $\cT_{X,n}$ in the setting of dg-categories and subsequently explicitly constructing a geometric realisation as a tower of projective bundles with foundation $X$.
Finally, in Section 4 we study the moduli stacks themselves.

\subsection*{Notation and conventions}
We always work over $\CC$. 
A curve is a smooth irreducible projective variety of dimension 1.
We denote by $\CC$-dgm the dg-category of complexes of $\CC$-vector spaces.


\section{Preliminaries}
\subsection{Bridgeland stability conditions}

\bd 
A \emph{t-structure} on a triangulated category $\cT$ consists of a pair of full additive subcategories  $(\cT^{\leq 0},\cT^{\geq 0})$ satisfying the following properties.
We write $\cT^{\leq i} \coloneqq \cT^{\leq 0}[-i]$ and $\cT^{\geq i} \coloneqq \cT^{\geq 0}[-i]$ for $i \in \ZZ$.
\begin{enumerate}
\item $\Hom_\cT(\cT^{\leq 0}, \cT^{\geq 1})=0$.
\item For all $E \in \cT$, there is a distinguished triangle $G \rightarrow E \rightarrow F \rightarrow G[1]$
 with $G \in \cT^{\leq 0}$ and $F \in \cT^{\geq 1}$.
\item $\cT^{\leq 0} \subset \cT^{\leq 1}$ and $\cT^{\geq 0} \supset \cT^{\geq 1}$.
\end{enumerate}
A t-structure is \emph{bounded} if every $E\in \cT$ is contained in $\cT^{\leq n} \cap \cT^{\geq -n}$ for some $n>0.$
The \emph{heart} of a bounded t-structure $(\cT^{\leq 0},\cT^{\geq 0})$ is defined as $\cA\coloneqq \cT^{\leq 0}\cap \cT^{\geq 0}.$
\ed

\brem The heart $\cA$ of a bounded t-structure on $\cT$ is an abelian category and $K(\cA)= K(\cT)$.
\erem

\bd \label{Slicing} A \emph{slicing} $\mathcal{P}$ on $\cT$ is a collection of full subcategories $\mathcal{P}(\phi)$ for all $\phi \in \RR$ satisfying:\begin{enumerate}\item $\mathcal{P}(\phi)[1]=\mathcal{P}(\phi + 1)$, for all $\phi \in \RR$.
\item If $\phi_1 > \phi_2$ and $E_i \in \mathcal{P}(\phi_i)$, $i=1,2$, then $\Hom_{\cT} (E_1,E_2)=0$.
\item For every nonzero object $E \in \cT$ there exists a finite sequence of maps
$$0=E_0\xrightarrow{f_{0}}E_1\xrightarrow{f_1}\dots \rightarrow E_{m-1}\xrightarrow{f_{m-1}}E_m=E$$
and of real numbers $\phi_0 > \cdots > \phi_{m-1}$ such that the cone of $f_j$ is in $\mathcal{P}(\phi_j)$ for $j=0,\cdots,{m-1}$.
\end{enumerate}
\ed

For every interval $I\subseteq \RR$ we define $\mathcal{P}(I)$ to be the extension-closed subcategory generated by the subcategories $\mathcal{P}(\phi)$ with $\phi\in \RR.$

\bd
Let $\cA$ be a heart.
We say that a group homomorphism $Z\colon K (\cA)\rightarrow \CC$ is a \emph{stability function} on $\cA$ if the image of $Z$ is contained in the semi-closed upper half plane
$\overline{\mathbb{H}}=\{\alpha\in \CC\mid \Im(\alpha)\geq 0 \textnormal{ and if } \Im(\alpha)=0\textnormal{, then } \Re(\alpha)<0 \}$. 
\ed
We now fix a finite rank $\mathbb{Z}$-lattice $\Lambda$ and a surjective homomorphism $v\colon K(\cT)\twoheadrightarrow \Lambda$.
When $\cT$ is numerically finite, we have that the numerical Grothendieck group $N(\cT)$ is a finite rank $\ZZ$-lattice.
We often choose $\Lambda=N(\cT)$ and $v$ as the natural projection.

We consider a group homomorphism $Z\colon \Lambda \rightarrow \CC$, such that $Z \circ v \colon K(\cA)\rightarrow \CC$ is a stability function on $\cA$. We define the slope by
\begin{equation} \nonumber
\mu_{\sigma}(E)=
\begin{cases} \nonumber
-\frac{\Re(Z(E))}{\Im(Z(E))} & \textnormal{ if } \Im(Z(E))\neq 0 \\
+\infty & \textnormal{otherwise} \enspace ,
\end{cases}
\end{equation}
where $Z(E)\coloneqq Z(v([E]))$.
We say that a non-zero $E\in \cA$ is \emph{$\sigma$-semistable (stable)} if for all proper subobjects  $F\subseteq E$, we have that $\mu_{\sigma}(F)\leq\mu_{\sigma}(E) (<)$.
We also define the phase of $E$ as  $\phi(E)=\arg(Z(E))\frac{1}{\pi}\in (0,1]$.  

\bd \label{DefSC}
A \emph{pre-stability condition} on $\cT$ is a pair $\sigma=(Z,\cA)$, where $\cA\subseteq \cT$ is the heart of a bounded t-structure and $Z\colon \Lambda \rightarrow \CC$ is a group homomorphism such that $ Z\circ v\colon K(\cA)\rightarrow \CC$ is a stability function on $\cA$ and every $E\in \cA$ has a Harder-Narasimhan (HN) filtration with $\sigma$-semistable factors.
If additionally $\sigma$ satisfies the support property i.e.\ there is a symmetric bilinear form $Q$ on $\Lambda_{\RR} \coloneqq \Lambda\otimes \RR$ such that $Q(v(E),v(E))\geq 0$ for all $\sigma$-semistable objects $E\in \cA$ and it is negative definite on the kernel of $Z,$ then $\sigma$ is called a \emph{Bridgeland stability condition} with respect to $\Lambda$.
\ed

\brem[{\cite[Proposition 5.3]{B07}}]
To give a pre-stability condition $\sigma$ on $\cT$ is equivalent to giving a slicing $\mathcal{P}$ and a group homomorphism $Z\colon \Lambda\rightarrow \CC$ such that for every non-zero $E\in \mathcal{P}(\phi),$ we have that $Z(E) \in \RR_{>0} \cdot e^{i \pi \phi}.$
The objects of $\mathcal{P}(\phi)$ are precisely the $\sigma$-semistable objects of phase $\phi.$
\erem

The set of Bridgeland stability conditions with respect to $(\Lambda,v)$ is denoted by $\Stab_{\Lambda}(\cT)$ and moreover, $\Stab_{\Lambda}(\cT)$ admits the structure of a complex manifold \cite{B07} and is referred to as the stability manifold.
If $\Lambda=N(\cT)$ and $v$ the natural projection, then the set of stability conditions is denoted by $\Stab(\cT)$.
If $\cT = D^b(X)$, we write by $\Stab(X)$.

\bd
We call a stability condition $\sigma=(Z,\cA) \in \Stab(\cT)$ \emph{algebraic} if the image of $Z \colon N (\cT) \rightarrow \CC$ is contained in $\QQ \oplus \QQ i$.
\ed

As in \cite[Lemma 8.2]{B07}, we consider the right action of $\GL$ on the stability manifold.
If $\sigma=(Z,\cA)$ is a stability condition and $g=(T,f)\in \GL$, then we define $\sigma \cdot g=(Z',\mathcal{P}')$ to be $Z=T^{-1} \circ Z$ and $\mathcal{P}'(\phi)=\mathcal{P}(f(\phi))$, where $\mathcal{P}$ and $\mathcal{P}'$ are the slicings of $Z$ and $Z'$ respectively.
Note that the $\GL$-action preserves the semistable objects, but relabels their phases.

Note that by \cite[Theorem 2.7]{M07}, if $g(C)\geq 1$ then $\Stab(C)\cong \GL.$ 
Let us consider the group $\Aut_{\Lambda}(\cT)$ of autoequivalences $\Phi$ on $\cT$ whose induced automorphism $\phi_*$ of $K(\cT)$ is compatible with the map $v\colon K(\cT)\rightarrow \Lambda.$
We define a left action of the group $\Aut_{\Lambda}(\cT)$ on the set of stability conditions.
For $\Phi\in\Aut_{\Lambda}(\cT)$ of $\cT.$ 
We define $\Phi(\sigma)=(Z',\mathcal{P}')$  as  $Z'=Z\circ \phi^{-1}_*$ and $\mathcal{P}'(\phi)=\Phi(\mathcal{P}(\phi)).$
Note that if $E$ is a $\sigma$-semistable object, then $\Phi(E)$ is $\Phi(\sigma)$-semistable.

\subsection{Bridgeland stability conditions on the category of holomorphic chains}

Let $X$ be a smooth projective variety over $\CC.$
An $n$-holomorphic chain is a chain of morphisms
\[E_1 \xrightarrow{\,\,\varphi_1 \,\,} E_2 \xrightarrow{\,\,\varphi_2 \,\,} \cdots \xrightarrow{\varphi_{n-1}} E_{n}\enspace .\]
where $E_i \in \Coh(X).$  
We denote the abelian category of such chains by $Q_{X,n}$ and its derived category by
\[\cT_{X,n} := D^b(Q_{X,n})\enspace .\]
The special case $Q_{X,2}$ is the abelian category of \emph{holomorphic triples over} $X$ as in \cite{MRRHR20}.
In this case, we write $\cT_{X}:=\cT_{X,2}$.

Recall the description of $\cT_X$ given in \cite[Section 3.1]{MRRHR20}:
There is a semiorthogonal decomposition $\cT_{X}=\lin D_1,D_2\rin$ where $D_j\cong \cT_{X}$ is the image of the fully faithful embeddings
\begin{multicols}{2}\noindent
\begin{eqnarray}\nonumber
i_1\colon D^b(X) &\hookrightarrow &\cT_{X}\\
 E&\mapsto & (E\rightarrow 0) \enspace ,\nonumber
\end{eqnarray}
\begin{eqnarray} \nonumber
i_{2}\colon  D^b(X) &\hookrightarrow &\cT_{X}\\
 E&\mapsto & (0\rightarrow E)\nonumber
\end{eqnarray}
\end{multicols}
respectively.

Note that we  have a semiorthogonal decomposition of the form $\cT_{X,n}=\lin D_1,\cT_{X,n-1}\rin$ by seeing $\cT_{X,n-1}$ as the subcategory of objects $E\in \cT_{X,n}$ with $E_1=0$.
Inductively, we get the \emph{standard semiorthogonal decomposition of} $\cT_{X,n}$.

\bd\label{def:chain-semiorth-decomp}
We denote the semiorthogonal decomposition of $\cT_{X,n}$ by
\[\cT_{X,n} = \lin D_{1},\dots,D_{n}\rin \enspace ,\]
where $D_{j}\cong D^b(X)$ and is given by the image of the functor $i_{j}\colon D^b(X) \hookrightarrow \cT_{X,n}$ defined by sending $E$ to the chain satisfying $E_j=E$ and $E_l=0$ for $l\neq k.$
\ed

\brem 
From these semiorthogonal decompositions, we conclude that $K(\cT_{X,n})=\bigoplus^n_{i=1} K(D^b(X)).$
Moreover, if $C$ is a  curve, we have that $N(\cT_{C,n})=\ZZ^{2n}$ with the isomorphism given by sending $[E]$ to $(d_1,r_1,\dots, d_n,r_n)$ with $d_i\coloneqq \deg(E_i)$ and $r_i=\rank(E_i).$
\erem

\subsubsection{CP-gluing and recollement} \label{sec:CP_rec}
 Let $\mathcal{T}$ be a triangulated category equipped with a semiorthogonal decomposition $\mathcal{T}=\langle \mathcal{D}_1, \mathcal{D}_2\rangle$ and let $i_j\colon D_{j}\rightarrow \cT$ be full embeddings for $j=1,2.$  Throughout the whole paper, we assume that semiorthogonal components are \emph{admissible} i.e.\ the functors $i_j$ have a left adjoint $i^*_j\colon \cT \rightarrow D_j$ and a right adjoint  $i^!_j\colon \cT \rightarrow D_j.$ 
 
\begin{proposition}[{\cite[Lemma 2.1]{CP10}}]\label{CP}
With the above notations, assume that we have t-structures $(\mathcal{D}_{i}^{\leq 0},\mathcal{D}_{i}^{\geq 0})$ with hearts $\mathcal{A}_i$ in $\mathcal{D}_i$, for $i=1,2$, such that
\begin{equation}\label{eq:glHom0}
\Hom_{\mathcal{D}}^{\leq 0}(i_1 \mathcal{A}_1,i_2 \mathcal{A}_2)=0 \enspace .
\end{equation}
Then there is a t-structure on $\mathcal{T}$ with the heart
\begin{equation}\label{eq:gluedA}
\gl( \mathcal{A}_1, \mathcal{A}_2)=\{E \in \mathcal{D} \mid i^!_2 E \in \mathcal{A}_2, i^*_1 E \in \mathcal{A}_1 \} \enspace .
\end{equation}
Moreover, $i_k\mathcal{A}_k \subset \mathcal{A} \coloneqq \gl(\mathcal{A}_1, \mathcal{A}_2)$ for $k=1,2$.
\end{proposition}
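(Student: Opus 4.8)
The plan is to verify the three axioms of a t-structure for the pair $(\cT^{\leq 0}, \cT^{\geq 0})$ defined by
\[
\cT^{\leq 0} = \{E \in \cT \mid i^*_1 E \in \cD_1^{\leq 0},\ i^!_2 E \in \cD_2^{\leq 0}\}, \qquad
\cT^{\geq 0} = \{E \in \cT \mid i^*_1 E \in \cD_1^{\geq 0},\ i^!_2 E \in \cD_2^{\geq 0}\},
\]
so that the heart becomes $\gl(\cA_1,\cA_2)$ as in \eqref{eq:gluedA}. Before anything else I would record the fundamental exact triangle attached to the semiorthogonal decomposition: for every $E \in \cT$ there is a functorial triangle
\[
i_2 i^!_2 E \longrightarrow E \longrightarrow i_1 i^*_1 E \longrightarrow i_2 i^!_2 E[1],
\]
where $i_1 i^*_1 E$ is the $\cD_1$-part and $i_2 i^!_2 E$ the $\cD_2$-part. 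I would also note that $i_1, i_2$ are exact (triangulated) and that $i^*_1$ is left adjoint to $i_1$ while $i^!_2$ is right adjoint to $i_2$; these adjunctions, together with $\Hom(i_1\cD_1, i_2\cD_2)=0$, are the only structural inputs.

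For axiom (1), I want $\Hom_\cT(\cT^{\leq 0}, \cT^{\geq 1})=0$. Given $E \in \cT^{\leq 0}$ and $F \in \cT^{\geq 1}$, I would apply $\Hom_\cT(-,F)$ to the triangle for $E$, reducing to showing $\Hom_\cT(i_1 i^*_1 E, F)=0$ and $\Hom_\cT(i_2 i^!_2 E, F)=0$. For the second, apply $\Hom_\cT(i_2 i^!_2 E, -)$ to the triangle for $F$: the $\cD_1$-part contributes $\Hom_\cT(i_2(\cdots), i_1(\cdots))=0$ by semiorthogonality, and the $\cD_2$-part reduces via adjunction $i^! \dashv$ wait---rather via $i_2$ fully faithful, to $\Hom_{\cD_2}(i^!_2 E, i^!_2 F[1])$, which vanishes since $i^!_2 E \in \cD_2^{\leq 0}$ and $i^!_2 F \in \cD_2^{\geq 1}$. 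For the $\cD_1$-part $\Hom_\cT(i_1 i^*_1 E, F)$, apply $\Hom(i_1 i^*_1 E, -)$ to the triangle for $F$; the contribution from $i_2 i^!_2 F[1]$ need not obviously vanish, so here I would instead use the adjunction $i^*_1 \dashv i_1$ only after first killing the $\cD_2$-component --- the cleanest route is to observe that $\Hom_\cT(i_1 A, i_2 B[k]) = 0$ for all $k$ and $A\in\cD_1$, $B\in\cD_2$ is false in general, so the correct move is: $\Hom_\cT(i_1 i^*_1 E, F) = \Hom_\cT(i_1 i^*_1 E, i_1 i^*_1 F)$ because the map $F \to i_1 i^*_1 F$ induces an isomorphism on $\Hom_\cT(i_1(-), -)$ (its cone $i_2 i^!_2 F[1]$ being right-orthogonal to $i_1\cD_1$), and then $\Hom_{\cD_1}(i^*_1 E, i^*_1 F)=0$ because $i^*_1 E \in \cD_1^{\leq 0}$, $i^*_1 F \in \cD_1^{\geq 1}$.

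For axiom (3) I simply note that $\cT^{\leq 0} \subset \cT^{\leq 1}$ and $\cT^{\geq 0} \supset \cT^{\geq 1}$ follow termwise from the corresponding inclusions for the t-structures on $\cD_1, \cD_2$ together with exactness of $i^*_1, i^!_2$ (shifts commute with these functors). The real work is axiom (2): given $E \in \cT$, I must produce a triangle $G \to E \to F \to G[1]$ with $G \in \cT^{\leq 0}$, $F \in \cT^{\geq 1}$. Here I would build the filtration by octahedra. Start with the semiorthogonal triangle $i_2 i^!_2 E \to E \to i_1 i^*_1 E \to$, then take t-structure truncations in each component: let $\tau^{\leq 0}_{\cD_2}(i^!_2 E) \to i^!_2 E \to \tau^{\geq 1}_{\cD_2}(i^!_2 E)$ and similarly for $i^*_1 E$ in $\cD_1$. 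Applying $i_2$, resp.\ $i_1$, and then splicing these with the semiorthogonal triangle via the octahedral axiom, I would assemble $G$ with $i^!_2 G = \tau^{\leq 0} i^!_2 E$, $i^*_1 G = \tau^{\leq 0} i^*_1 E$ and $F$ with the $\tau^{\geq 1}$ parts. The subtle point --- and the main obstacle --- is that the gluing hypothesis \eqref{eq:glHom0}, namely $\Hom^{\leq 0}_\cD(i_1\cA_1, i_2\cA_2)=0$, is exactly what is needed to make the octahedral construction close up: without it, the candidate $G$ need not have its $i^*_1$ in $\cD_1^{\leq 0}$ after the splicing, because the connecting map between the $\cD_1^{\leq 0}$-truncation and the $\cD_2^{\leq 0}$-truncation of the shifted pieces could fail to vanish. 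I would unwind \eqref{eq:glHom0} (which says $\Hom^{\leq k}_\cD(i_1\cA_1, i_2\cA_2)=0$ for $k\leq 0$, hence more generally $\Hom_\cD(i_1 \cD_1^{\geq 1}, i_2 \cD_2^{\leq 0})=0$ after dévissage) and check that it precisely guarantees the relevant Hom-group between the two truncation triangles vanishes, so the octahedral diagram produces $G, F$ with the required properties. Finally, $i_k \cA_k \subset \cA$ is immediate: for $A \in \cA_1$, $i^*_1 i_1 A = A \in \cA_1$ and $i^!_2 i_1 A = 0 \in \cA_2$ by semiorthogonality (and dually for $k=2$), so $i_1 A, i_2 A \in \cT^{\leq 0}\cap \cT^{\geq 0} = \cA$.
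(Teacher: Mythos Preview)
The paper does not supply its own proof of this proposition; it is quoted from Collins--Polishchuk. So there is no proof in the paper to compare against, and I will simply comment on your argument.

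Your overall strategy---verifying the t-structure axioms for the pair defined via $i_1^*$ and $i_2^!$---is the standard one and is correct in outline. Axiom (3) and the final inclusion $i_k\cA_k \subset \cA$ are fine. But there are two related slips, both coming from confusing the direction of semiorthogonality.

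In axiom (1) you write that the cone $i_2 i_2^! F[1]$ is ``right-orthogonal to $i_1\cD_1$'', so that $F \to i_1 i_1^* F$ induces an isomorphism on $\Hom_\cT(i_1(-),-)$. This is false: the decomposition $\cT = \langle \cD_1, \cD_2\rangle$ gives $\Hom(\cD_2,\cD_1)=0$, i.e.\ $i_2\cD_2$ is \emph{left}-orthogonal to $i_1\cD_1$, not right-orthogonal, and in general $\Hom(i_1 A, i_2 B)$ need not vanish. What actually makes $\Hom(i_1 i_1^* E,\, i_2 i_2^! F)$ vanish here is precisely the gluing hypothesis \eqref{eq:glHom0}: by d\'evissage it yields $\Hom_\cT(i_1 \cD_1^{\leq 0}, i_2 \cD_2^{\geq 0})=0$, and since $i_1^* E \in \cD_1^{\leq 0}$ and $i_2^! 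F \in \cD_2^{\geq 1}$ this is exactly what you need. So the gluing condition is already used in axiom (1), not only in axiom (2).

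The same reversal appears in your discussion of axiom (2): the consequence you state, $\Hom_\cT(i_1 \cD_1^{\geq 1}, i_2 \cD_2^{\leq 0})=0$, does \emph{not} follow from \eqref{eq:glHom0} (it would require vanishing of $\Hom^{>0}(i_1\cA_1,i_2\cA_2)$, about which the hypothesis says nothing). The correct d\'evissage gives $\Hom_\cT(i_1 \cD_1^{\leq 0}, i_2 \cD_2^{\geq 0})=0$, and this is what the octahedral construction actually needs: the obstruction to lifting the connecting map $i_1 i_1^* E \to i_2 i_2^! E[1]$ compatibly with the truncations lies in $\Hom\bigl(i_1\tau^{\leq 0}_{\cD_1}(i_1^* E),\, i_2\tau^{\geq 1}_{\cD_2}(i_2^! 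E)[1]\bigr)$, which then vanishes. With these two corrections your outline goes through.
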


Let us consider $\cT=\lin D_1,\dots, D_n \rin$ and the full embeddings $i_j\colon D_j\rightarrow \cT$.
Let $\cA_i\subseteq D_{i}$ be the hearts of a bounded t-structures for $i=1,\dots ,n$.
Assume the hearts satisfy the \emph{gluing condition}
\[\Hom_{\cT}^{\leq 0}(i_{l}\cA_{l},i_{j}\cA_{j})=0\]
for all $0<l<j\leq n$.
We define $\cB_1\coloneqq \gl(\cA_{n-1},\cA_{n})$ and $\cB_{j}\coloneqq \gl(\cA_{n-{j}},\cB_{j-1})$.
By Proposition \ref{CP}, we have that $\cB_{j}$ is a heart of a bounded t-structure on the triangulated subcategory $\lin D_{n-j},\dots ,D_{n} \rin \subseteq \cT$ for $j=1,\dots, n-1$.
 
\bd \label{definition_CPheart}
We define $\gl(\cA_{1},\dots, \cA_{n})\coloneqq \cB_{n-1}\subseteq \cT.$
We refer to $\gl(\cA_{1},\dots, \cA_{n})$ as a \emph{gluing heart} of $\cT$ with respect to the semiorthogonal decomposition $\cT=\lin D_1,\dots, D_n \rin$.
\ed
 
\brem \label{Rem_filtrationCP}
 \begin{enumerate}
    \item Since the gluing of two noetherian hearts is again a noetherian heart, it follows that a gluing heart in the sense of Definition \ref{definition_CPheart} is noetherian.
     \item Note that if $E\in \cA=\gl(\cA_1,\dots,\cA_n)$ if and only if there is a sequence of triangles
 \[\begin{tikzpicture}[description/.style={fill=white,inner sep=1.4pt}]
    \matrix (m) [matrix of math nodes, row sep=1.5em,
    column sep=0.5em, text height=0.8ex, text depth=0.19ex]
    {0 & & E_1  & & E_2 \cdots & & E_{n-1} & & E \\
& A_n & & A_{n-1} & & \cdots  & & A_1  \\ };
       \path[->]
       
       (m-1-7) edge node[auto] {} (m-1-9)
       (m-1-5) edge node[auto] {} (m-1-7)
           (m-1-3) edge node[auto] {} (m-1-5)
		   (m-1-1) edge node[auto] {} (m-1-3)
         (m-1-5)   edge node[auto] {} (m-2-4)
                  (m-1-3)   edge node[auto] {} (m-2-2)
    (m-2-2) edge[dashed] node[auto] {} (m-1-1)
(m-2-4) edge[dashed] node[auto] {} (m-1-3)
(m-1-9) edge node[auto] {} (m-2-8)
(m-2-8) edge[dashed] node[auto] {} (m-1-7);
\end{tikzpicture}\enspace .\]
with $A_j\in i_{j}\cA_j,$ for $j=1,\dots, n.$
\item $\gl(\cA_1,\cA_2,\cA_3)=\gl(\gl(\cA_1,\cA_2),\cA_3)$ in $\cT=\lin D_1,D_2,D_3 \rin.$ 
\end{enumerate}
 \erem 

Let $\sigma_{i}=(Z_{i},\cA_{i})\in \Stab(X).$
If the hearts $\cA_{i}$ satisfy the gluing condition, then by recursively applying Proposition \ref{CP}, there is a stability function on $\cA=\gl(\cA_{1},\dots, \cA_{n})$ given by $Z(E)=\sum^n_{i=1}Z_{i}(A_{i}),$ with $A_{i}$ as in Remark \ref{Rem_filtrationCP}.
Moreover, if $\sigma_{i}$ are algebraic stability conditions, we obtain that $\gl(\sigma_{1},\dots, \sigma_{n})\coloneqq(Z,\cA)$ is a locally finite pre-stability condition, see \cite[Lemma 4.4]{B08}.
If a stability condition $\sigma\in \Stab(\cT_{X,n})$ is equal to $\gl(\sigma_1,\dots ,\sigma_{n})$, we refer to $\sigma$ as a \emph{gluing stability condition}.

The next example defines $\alpha$-stability as studied in \cite{ACGP01}.

\be\label{ex_alpha}($\alpha$-stability)
Let $C$ be a curve and $(\alpha_{j})_{j=1,\dots,n}\in\QQ^n.$
Consider $\sigma_{\alpha_j}=(\Coh(C),Z_{\alpha_j})\in \Stab(C)$ where $Z_{\alpha_j}(d,r)=-d-\alpha_jr+ir$.
We define $\sigma_{\alpha}=\gl(\sigma_{\alpha_{1}},\dots ,\sigma_{\alpha_{n}})$ is a locally finite pre-stability condition.
\ee

\begin{theorem}[{\cite[Theorem 1.4.10]{BBD}}]\label{thm:BBDheart}
Let $(D^{\leq 0}_1,D^{\geq 0}_1)$ and 
$(D^{\leq 0}_2,D^{\geq 0}_2)$ be the bounded t-structures with hearts $\cA_{1}$ and $\cA_{2}$ respectively and $\cT=\lin D_1, D_2\rin.$

Then there is a t-structure $(\cT^{\leq 0}, \cT^{\geq 0})$ in $\mathcal{T}$ defined by:
\[\begin{array}{c}
\mathcal{T}^{\leq 0}\coloneqq \{T \in \mathcal{T} \mid i_1^*T \in D_1^{\leq 0},{i_2}^* T \in D_2^{\leq 0} \} \\
\mathcal{T}^{\geq 0}\coloneqq \{T \in \mathcal{T} \mid i_1^*T \in D_1^{\geq 0}, {i_2}^! T \in D_2^{\geq 0} \} \enspace .
\end{array}\]
We denote by $\rec(\cA_1,\cA_2) \coloneqq \mathcal{T}^{\leq 0} \cap \mathcal{T}^{\geq 0}$. We say that $\rec(\cA_1,\cA_2)$ is a recollement heart with respect to the semiorthogonal decomposition $\cT=\lin D_1, D_2\rin$.
\end{theorem}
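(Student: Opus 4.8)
The plan is to recall the proof of this gluing theorem of Beilinson--Bernstein--Deligne, verifying directly that the displayed pair $(\cT^{\leq 0},\cT^{\geq 0})$ satisfies the three axioms of a t-structure, using the recollement structure underlying the semiorthogonal decomposition $\cT=\lin D_1,D_2\rin$. Since both components are admissible we have the adjunctions $i_j^{*}\dashv i_j\dashv i_j^{!}$; full faithfulness of the $i_j$ gives $i_1^{*}i_1\cong\id$ and $i_2^{*}i_2\cong\id\cong i_2^{!}i_2$; the orthogonality $\Hom_{\cT}(D_2,D_1)=0$ forces $i_1^{*}i_2=0$ and $i_2^{!}i_1=0$; and every $E\in\cT$ lies in a functorial triangle
\[
i_2 i_2^{!}E\longrightarrow E\longrightarrow i_1 i_1^{*}E\xrightarrow{\ +1\ }.
\]
Write $\tau^{\leq 0}_{D_j},\tau^{\geq 1}_{D_j}$ for the truncation functors of the given t-structures on $D_j$. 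The shift axiom $\cT^{\leq 0}\subseteq\cT^{\leq 1}$, $\cT^{\geq 1}\subseteq\cT^{\geq 0}$ is immediate, as $i_1^{*},i_2^{*},i_2^{!}$ are exact and the corresponding inclusions hold on $D_1$ and $D_2$; likewise boundedness of $(\cT^{\leq 0},\cT^{\geq 0})$ follows from that on $D_1,D_2$, since for every $E$ each of $i_1^{*}E,i_2^{*}E,i_2^{!}E$ lies in a bounded cohomological range there.

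For the orthogonality axiom I would take $K\in\cT^{\leq 0}$ and $L\in\cT^{\geq 1}$, apply $\Hom_{\cT}(K,-)$ to the triangle above for $L$, and use $i_j^{*}\dashv i_j$ to identify the two outer terms:
\[
\Hom_{\cT}(K,i_2 i_2^{!}L)\cong\Hom_{D_2}(i_2^{*}K,i_2^{!}L),\qquad
\Hom_{\cT}(K,i_1 i_1^{*}L)\cong\Hom_{D_1}(i_1^{*}K,i_1^{*}L).
\]
By the defining conditions $i_2^{*}K\in D_2^{\leq 0}$, $i_2^{!}L\in D_2^{\geq 1}$ and $i_1^{*}K\in D_1^{\leq 0}$, $i_1^{*}L\in D_1^{\geq 1}$, so both groups vanish by orthogonality of the t-structures on $D_2$ and $D_1$; hence $\Hom_{\cT}(K,L)=0$.

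The substantive step --- and the one I expect to be the main obstacle --- is the existence of functorial truncation triangles, which I would build in two stages following BBD. Given $A\in\cT$, first \emph{correct the open part}: put $B_1:=i_1\bigl(\tau^{\geq 1}_{D_1}i_1^{*}A\bigr)$, which lies in $\cT^{\geq 1}$ since $i_1^{*}B_1\cong\tau^{\geq 1}_{D_1}i_1^{*}A$ and $i_2^{!}B_1=0$; compose the unit $A\to i_1 i_1^{*}A$ with $i_1$ of the truncation map to obtain $A\to B_1$ and set $A':=\mathrm{fib}(A\to B_1)$, so that $i_1^{*}A'\cong\tau^{\leq 0}_{D_1}i_1^{*}A\in D_1^{\leq 0}$. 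Then \emph{correct the closed part}: put $B_2:=i_2\bigl(\tau^{\geq 1}_{D_2}i_2^{*}A'\bigr)\in\cT^{\geq 1}$ and $A'':=\mathrm{fib}(A'\to B_2)$; because $i_1^{*}i_2=0$ the open part is unchanged ($i_1^{*}A''\cong i_1^{*}A'$) while $i_2^{*}A''\cong\tau^{\leq 0}_{D_2}i_2^{*}A'\in D_2^{\leq 0}$, so $A''\in\cT^{\leq 0}$. Finally the octahedral axiom applied to the composite $A''\to A'\to A$ yields a triangle $\cone(A''\to A')\to\cone(A''\to A)\to\cone(A'\to A)$, i.e.\ $B_2\to B\to B_1\xrightarrow{+1}$ with $B:=\cone(A''\to A)$; as $\cT^{\geq 1}$ is closed under extensions, $B\in\cT^{\geq 1}$, and $A''\to A\to B\xrightarrow{+1}$ is the required truncation triangle. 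The delicate points are the bookkeeping with the identities above --- in particular the asymmetric roles of $i_2^{*}$ and $i_2^{!}$ in the definitions of $\cT^{\leq 0}$ and $\cT^{\geq 0}$ --- and checking that each constructed object indeed lands in the prescribed piece of the t-structure. With the axioms verified, $\rec(\cA_1,\cA_2)=\cT^{\leq 0}\cap\cT^{\geq 0}$ is then automatically an abelian category.
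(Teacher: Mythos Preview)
The paper does not give its own proof of this statement: Theorem~\ref{thm:BBDheart} is stated with a citation to \cite[Theorem~1.4.10]{BBD} and used as a black box. Your proposal is a correct sketch of the standard Beilinson--Bernstein--Deligne argument, so there is nothing to compare against beyond the original reference; your two-step truncation (first correct along $i_1$, then along $i_2$, then invoke the octahedral axiom) is exactly the BBD construction, and your verification that each auxiliary object lands in the right aisle via the identities $i_1^{*}i_2=0$, $i_2^{!}i_1=0$, $i_j^{*}i_j\cong\id\cong i_j^{!}i_j$ is sound.
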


\brem \label{rem_glu_rec}
By \cite[Proposition 2.8.12]{MR18}, if $\cA_j\subseteq D_{j}$ satisfy the gluing condition, then $\rec(\cA_1,\cA_2)=\gl(\cA_1,\cA_2)$.
\erem 

Let $C$ be a curve with $g(C)\geq 1$. Under the assumption that all the pre-stability conditions constructed in \cite{MRRHR20} satisfy the support property, we have the following result.

\bt \label{them_classificationC_connected} \cite[Theorem 1.1 and Theorem 4.47]{MRRHR20}
The stability manifold $\Stab(\cT_{C})$ is a connected $4$-dimensional complex manifold. 
Moreover, if $\sigma=(Z,\cA)\in \Stab(\cT_C)$ then, up to autoequivalence, it satisfies one of the following properties:
\begin{enumerate}
    \item $\sigma$ is a gluing stability condition with respect to the semiorthogonal decomposition $\cT_{C}=\lin D_1,D_2\rin.$
    \item There is $g\in \GL$ such that, $\sigma'=\sigma g$ is a gluing stability condition in (1).
    \item There is a $g\in \GL$ and  $\sigma''=(Z'',\cA'')$ satisfying (2), such that for $\sigma g=(Z',\cA')$ we have that  $\cA''=\cA'.$
\end{enumerate}
\et

\subsection{Base change for semiorthogonal decompositions and hearts}

The base change of a semiorthogonal decomposition was introduced by Kuznetsov in \cite{K11} and generalised in \cite{BLMNPS20}.
The base change a heart of $D^b(Y)$ for a smooth projective variety was first given in \cite{AP06} and greatly generalised in the relative setting in \cite{BLMNPS20}.

\bd 
Let $\cT \subseteq  D(\QCoh(X))$ be a triangulated subcategory.
If $Y$ is a scheme and $\Phi\colon \cT \rightarrow D(\QCoh(Y))$ is a triangulated functor, we say that $\Phi$ has cohomological amplitude $[a, b]$ if
\[\Phi\left(\cT\cap D_{\rm{qc}}^{[p,q]}(X)\right) \subseteq D_{\rm{qc}}^{[p+a,q+b]}(Y)\]
for all $p,q \in \mathbb{Z}$, where $(D_{\rm{qc}}(X)^{\leq 0}, D^{\geq 0}_{\rm{qc}}(X))$ is the standard t-structure of $D(\QCoh(X))$.
We say $\Phi$ has \emph{left finite cohomological amplitude} if $a$ can be chosen finite, \emph{right finite cohomological amplitude} if $b$ can be chosen finite, and \emph{finite cohomological amplitude} if $a$ and $b$ can be chosen finite. 
We say that a semiorthogonal decomposition $\cT = \lin D_1,\dots, D_n\rin$ is of (right or left) finite cohomological amplitude if its projection functors have (right or left) finite cohomological amplitude.
\ed

\brem \label{rem_strongfca_general}
In order to apply the results of \cite{K11} and \cite{BLMNPS20}, we require a full admissible triangulated subcategory $\cT\subseteq D^b(Y)$ which is a strong semiorthogonal component of a semiorthogonal decomposition of finite cohomological amplitude.
Indeed this is the case under the condition that $Y$ is a smooth projective variety by \cite[Lemma 2.9]{K11}.
Moreover, the admissibility of $\cT$ then implies the admissibility of $\cT^{\perp}$ in this case and thus we only require that $\cT$ is admissible.
\erem

We start by giving the base change of an admissible semiorthogonal component.
Let $Y$ be a smooth projective variety and $S$ be a quasi-projective variety.
Let $p\colon X\times S \rightarrow X$ and $q\colon X\times S \rightarrow S$ be the projections. 

\bp \label{prop_base_change_sod}\cite[Corollaries 5.7 and 5.9]{K11}
Let $\cT \subseteq D^b(Y)$ be an admissible subcategory,
then the category
\[\cT_{S} = \{F \in D^b(Y\times S)  \mid  \bR p_{*}(F \otimes q^*
G) \in \widehat{\cT} \textnormal{ for all } G \in D_{\perf}(S)\} \enspace ,\] 
is an admissible subcategory in $D^b(Y\times S)$ such that the corresponding projection functor has finite cohomological amplitude.
Here $\widehat{\cT}$ is the minimal triangulated subcategory of $D(\QCoh(Y))$ containing $\cT$ closed under arbitrary direct sums.
Additionally, we have that $p^*(\cT)\subseteq \cT_{S}$ and $\bR p_*(\cT_{S})\subseteq \cT,$ if $S$ is projective. Moreover, $D^b(Y\times S)=\lin \cT^{\perp}_{S},\cT_{S} \rin.$
\ep

Let us consider the inclusion $i_{s}\colon  Y \times \{s\} \hookrightarrow Y\times S$ and we denote $E_{s}\coloneqq \bL i_{s}^*(E).$

\bl\label{lemma_monster_9.3}\cite[Lemma 9.3]{BLMNPS20}
Let $S$ be a quasi-compact $\CC$-scheme with affine diagonal and let $E \in D^b(Y \times S)$.
Then
\begin{enumerate}
    \item $E \in \cT_S$ if and only if   $E_s \in \cT$ for every $s \in S,$ where $i_{s}\colon  X \times \{s\} \hookrightarrow X\times S.$
    \item The set
  $ \left\{ s \in S \,\, | \,\, E_s \in \cT \right\}$
    is open.
\end{enumerate}
\el

\bt \label{thm_base_change_hearts} \cite[Theorem 5.7]{BLMNPS20} Let Y be a smooth projective variety and $\cT\subseteq D^b(Y)$ be a full admissible subcategory. Let $(\cT^{\leq 0}, \cT^{\geq 0})$ be a bounded t-structure on $\cT$ with a noetherian heart $\cA$. If $S$ is a smooth quasi-projective variety, then 
\begin{enumerate}
    \item (\emph{$S$-local})  For every open $U \subseteq S$, there exists a t-structure $(\cT_{U}^{\leq 0}, \cT^{\geq 0}_{U})$
on  $\cT_{U}$ with heart $\cA_{U}$ such that the restriction functor $i^*\colon \cT_{S}\rightarrow \cT_{U},$ induced by $i\colon U\hookrightarrow S,$  is t-exact.
\item $\cA_{S}$ is noetherian.
\item If $S$ is projective and  $L$ is an ample line bundle, then 
\[(\cT^{[a,b]})_{S}=\{E\in \cT_{S} \,\, | \,\, \bR p_{*}(E\otimes q^*(L)^n)\in \cT^{[a,b]} \textnormal{ for all } n\gg 0 \}\enspace .\]
\item Let $S'$ be a smooth quasiprojective variety. For a morphism $f\colon S'\rightarrow S$  and its induced morphism
$f'\colon S'\times Y \rightarrow S\times Y,$ we  have that $f'^*\colon \cT_{S}\rightarrow \cT_{S'}$ is right t-exact. Moreover, if $f$ is flat, then $f'^*$ is t-exact and if $f$ is finite, then $f'_*\colon \cT_{S'}\rightarrow \cT_{S}$ is t-exact. 
\end{enumerate}
\et


\section{Geometric realisations}

 A  \emph{geometric realisation} of a triangulated category $\cT$ is a fully faithful embedding $\cT \hookrightarrow D^b(X)$ for some $\CC$-scheme $X$.
In general, we will be interested in the case where $X$ is a smooth projective variety. In order to use \cite{BLMNPS20}, we are going to prove the existence of a geometric realisation $\cT_{X,n}\hookrightarrow D^b(Y_{X,n})$ of $\cT_{X,n}$ for a smooth projective variety $Y_{X,n}.$

\subsection{dg-enhancements and gluing}

A general reference for dg-categories is \cite{Ke06} and we refer the reader there for definitions of dg-categories and dg-bimodules.
We use the conventions and notation of \cite{KL15}.

In this section we prove that a triangulated category $\cT$ with a semiorthogonal decomposition, such that $\cT$ and the semiorthogonal components admit unique dg-enhancements, is completely determined by the semiorthogonal components and the gluing functor.
This result is well-known and is a consequence of \cite[Proposition 4.10]{KL15}.
To the best of the authors' knowledge, a proof does not occur in the literature and so one is included for the sake of completeness.

Suppose that $\cD$ is a dg-category.
We denote its homotopy category $[\cD]$.
Recall that a dg-category is \emph{pretriangulated} if $[\cD]$ is a triangulated category.

\bd 
An \emph{enhancement} of a triangulated category $\cT$ is a pretriangulated dg-category $\cD$ with an equivalence $\cT \cong [\cD]$ of triangulated categories.
\ed

A triangulated category $\cT$ has a unique dg-enhancement, if any two dg-enhancements $\cD, \cD'$ of $\cT$ are quasiequivalent.
 
\bl \label{lemma_uniquedgn}
The category $\cT_{X,n}$ has a unique dg-enhancement.
\el

We denote this enhancement $\cD_{X,n}$.
The existence follows from standard arguments, see \cite{CS17}.
If $\cD(X)$ is the dg-enhancement of $D^b(X).$

It could be also explicitly given by $\Mor(\cD(X))$, the dg-category of morphisms as defined in \cite[Example 2.9]{D04}.

\begin{proof} 
We proceed by induction on $n$.
For $n=1,$ we have that $\cT_{X,1}\cong D^b(X)$ and this case is well known \cite[Corollary 7.2]{CS18}.
Now take $n>1$ and let us assume that $\cT_{X,n}$ has unique dg-enhancement.

Recall the semiorthogonal decomposition of $\cT_{X,n}$ given in Definition \ref{def:chain-semiorth-decomp}.
Let $\cD_{n+1}$ be a dg-enhancement of $\cT_{X,n+1}=\lin D_1,\cT_{X,n} \rin$, where $D_1 \cong D^b(X)$.
Take $\cD_1$ and $\cD_{n}$ to be the full  dg-subca- \newline tegories of $\cD_{n+1}$ having the same objects as $D_1$  and $\cT_{X,n}$ respectively.
By induction, we have that $\cD_1$ and $\cD_{n}$ are the unique dg-enhancements of $D^b(X)$ and $\cT_{X,n}$ respectively.
Moreover, by \cite[Proposition 4.10]{KL15} we have the existence of a dg-module $\Phi$ given by 
\begin{eqnarray*}
\Phi\colon \cD_{n}^{\op}\otimes \cD_{1} &\longrightarrow & \CC-\rm{dgm}\\ \nonumber
(E_2,E_1)&\longmapsto & \Hom_{\cD_{n+1}}(i(E_1),j(E_2)[1]) \enspace ,
\end{eqnarray*}
where $i\colon \cD_1\rightarrow \cD$ and $j\colon \cD_{n}\rightarrow \cD_{n+1}$ are the corresponding inclusions, such that $\cD_{1}\times_{\Phi} \cD_{n}\cong \cD_{n+1}$ is the gluing of $D_1$ and $D_n$ along $\Phi$, see \cite[Definition 4.1]{KL15}.

Analogously, if we have another dg-enhancement $\cD'_{n+1}$ of $\cT_{X,n+1},$ there is a $\cD^{\op}_{n}\otimes \cD_{1}$-bimodule $\Phi',$ such that $\cD'_{n+1}\cong \cD_1\times_{\Phi'} \cD_{n}.$
By \cite[Lemma 4.7]{KL15}, it is enough to show that $\Phi$ and $\Phi'$ are quasiisomorphic.
This is indeed the case, since
\[H^i(\Hom_{\cD_{n+1}}(E,F))=\Hom^i_{\cT_{X,n+1}}(E,F) \textnormal{ and } H^i(\Hom_{\cD'_{n+1}}(E,F))=\Hom^i_{\cT_{X,n+1}}(E,F) \enspace ,\]
for all $E,F \in \cT_{X,n+1}.$
As a consequence, we have $\cD'_{n+1}\cong \cD_{n+1}.$
\end{proof}


Consider triangulated categories $\cT$ and $\cT'$ and suppose that we have full triangulated subcategories and embeddings $i \colon \cT_1 \to \cT$, $j \colon \cT_2 \to \cT$ and $i' \colon \cT_1 \to \cT'$, $j'\colon \cT_2 \to \cT'$.
Suppose additionally that we have semiorthogonal decompositions
\[\cT = \langle i(\cT_1) , j(\cT_2) \rangle \quad \text{ and } \quad \cT' = \langle i'(\cT_1),j'(\cT_2) \rangle \enspace .\]
If $\cT$ or $\cT'$ admit a dg-enhancement, we deduce that $\cT_1$ and $\cT_2$ admit dg-enhancements \cite[Proposition 4.10]{KL15}.

\bl \label{lemma_dggluing}
Suppose that $\cT$ and $\cT'$ are triangulated categories admitting dg-enhancements $\cD$ and $\cD'$ respectively and that $\cT_{i}$ has a unique dg-enhancement $\cD_{i}$ for $i=1,2.$
Assume that the gluing functors coincide, that is,
\[i^{!}j = i'^!j' \colon \cT_2 \to \cT_1 \enspace .\]
Then $\cT \cong \cT'$.
\el

\begin{proof}
By \cite[Proposition 4.10]{KL15}, we have that $\cD$ is quasiequivalent to $\cD_{1}\times_{\Phi}\cD_{2},$ where $\cD_{1}\times_{\Phi}\cD_{2}$ is the glued dg-category along the bimodule
\begin{eqnarray*}
\Phi\colon \cD^{\op}_{2}\otimes \cD_{1} &\longrightarrow & \CC-\rm{dgm}\\ \nonumber
(E_2,E_1)&\longmapsto & \Hom_{\cD}(i(E_1),j(E_2)[1]) \enspace .
\end{eqnarray*}
Analogously, we have that $\cD'$ is quasiequivalent to $\cD_{1}\times_{\Phi'}\cD_{2}$ where 
\begin{eqnarray*}
\Phi'\colon \cD'^{\op}_{2}\otimes \cD'_{1} &\longrightarrow & \CC-\rm{dgm}\\ \nonumber
(E_2,E_1)&\longmapsto & \Hom_{\cD}(i'(E_1),j'(E_2)[1]) \enspace . 
\end{eqnarray*}
Moreover, we also have that the homotopy category $[\cD_{1}\times_{\Phi}\cD_{2}]$ is equivalent to $\cT$ and
$[\cD_{1}\times_{\Phi'}\cD_{2}]$ is equivalent to $\cT'.$

\begin{claim} The bimodules $\Phi$ and $\Phi'$ are quasiisomorphic.
\end{claim}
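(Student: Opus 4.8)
The plan is to identify each of $\Phi$ and $\Phi'$, up to quasiisomorphism, with the dg-bimodule attached to the common gluing functor $i^{!}j = i'^{!}j'\colon\cT_2\to\cT_1$; the agreement of these functors will then force the two bimodules to coincide. The single non-formal ingredient will be that a quasi-functor between the fixed enhancements $\cD_2$ and $\cD_1$ is determined, up to isomorphism, by the functor it induces on homotopy categories.

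First I would compute the cohomology of the complexes $\Phi(E_2,E_1)$ for $E_1\in\cD_1$ and $E_2\in\cD_2$. Using $[\cD]\cong\cT$ together with the adjunction $i\dashv i^{!}$ coming from admissibility of the component $i(\cT_1)\subseteq\cT=\langle i(\cT_1),\,j(\cT_2)\rangle$, one obtains
\[
H^{k}\bigl(\Phi(E_2,E_1)\bigr) \;=\; \Hom_{\cT}^{\,k+1}\bigl(i(E_1),\,j(E_2)\bigr) \;=\; \Hom_{\cT_1}^{\,k+1}\bigl(E_1,\,(i^{!}j)(E_2)\bigr),
\]
naturally in both variables, and in the same way $H^{k}(\Phi'(E_2,E_1))=\Hom_{\cT_1}^{\,k+1}(E_1,(i'^{!}j')(E_2))$. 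Thus, for each fixed $E_2$, the right $\cD_1$-module $\Phi(E_2,-)$ is quasi-representable: it is quasiisomorphic to the module represented by $(i^{!}j)(E_2)[1]$, and the same holds for $\Phi'$. By the standard dictionary between right quasi-representable $\cD_2^{\op}\otimes\cD_1$-bimodules and quasi-functors $\cD_2\to\cD_1$ (see \cite[\S 4]{KL15}, cf.\ \cite{D04}), the bimodules $\Phi$ and $\Phi'$ are therefore associated to quasi-functors $\widehat\phi,\widehat{\phi'}\colon\cD_2\to\cD_1$ whose underlying homotopy functors are $i^{!}j[1]$ and $i'^{!}j'[1]$ respectively.

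Since $i^{!}j=i'^{!}j'$ by hypothesis, $\widehat\phi$ and $\widehat{\phi'}$ induce one and the same functor $\cT_2\to\cT_1$, so it remains to show $\widehat\phi\cong\widehat{\phi'}$, whence $\Phi\simeq\Phi'$. For this I would invoke that $\cT_2$ has a strongly unique dg-enhancement, so that the dg-lift of a functor out of $\cT_2$ is unique up to isomorphism; in the cases of interest $\cT_2$ is obtained by iterated gluing of copies of $D^{b}(X)$ with $X$ smooth projective, so this can be extracted by passing the componentwise uniqueness results of \cite{CS17,CS18} through \cite[Proposition 4.10]{KL15}, exactly as in the proof of Lemma \ref{lemma_uniquedgn}. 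I expect this last step to be the main obstacle: promoting the objectwise isomorphism of cohomology groups to an honest quasiisomorphism of dg-bimodules genuinely needs the (strong) uniqueness of the enhancement of $\cT_2$, and does not follow from the cohomology computation alone. The remaining pieces — the adjunction identity and the bimodule/quasi-functor correspondence — are routine.
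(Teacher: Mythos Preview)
Your cohomology computation via the adjunction $i \dashv i^{!}$ is exactly the paper's entire argument: the paper declares that it suffices to check $H^{k}(\Phi(E_2,E_1)) \cong H^{k}(\Phi'(E_2,E_1))$ for all $E_1,E_2$, identifies both sides with $\Hom^{k}_{\cT_1}(E_1, i^{!}j(E_2)[1])$ using adjunction, invokes $i^{!}j = i'^{!}j'$, and concludes. Everything you do after your first display --- the passage to quasi-representable bimodules, the quasi-functor dictionary, and the appeal to \emph{strong} uniqueness of the enhancement of $\cT_2$ --- is additional rigour the paper does not supply. You have correctly put your finger on the point the paper glosses over: a pointwise isomorphism of cohomology groups does not by itself produce a morphism of bimodules, and your route through quasi-functors is one honest way to close that gap. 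Note, however, that the lemma as stated only hypothesises \emph{unique} (not strongly unique) enhancements for the $\cT_i$, so your fix imports an assumption not present in the statement --- one that, as you say, is available in the intended application to $\cT_{X,n}$. In short, the paper's proof is the first third of yours and simply treats the cohomology agreement as sufficient; your version is more careful but at the cost of a stronger hypothesis.
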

\begin{proof} It suffices to show that 
$H^i(\Hom_{D}(i(E_1),j(E_2)[1]))\cong H^i(\Hom_{D'}(i'(E_1),j'(E_2)[1]))$ for all $E_1\in \cD_1$ and $E_2\in \cD_2.$
Note that
\[H^i(\Hom_{D}(i(E_1),j(E_2)[1]))=\Hom^i_{\cT}(i(E_1),j(E_2)[1])) \enspace .\]
By adjunction, we have that $\Hom^i_{\cT}(i(E_1),j(E_2)[1]))\cong \Hom^i_{\cT_1}(E_1,i^!j(E_2)[1]))$.
In the same way we get that \[H^i(\Hom_{D'}(i'(E_1),j'(E_2)[1]))=\Hom^i_{\cT_1}(E_1,i'^!j'(E_2)[1])) \enspace .\]
As $i^!j=i'^!j',$ we obtain that $\Phi$ and $\Phi'$ are quasiisomorphic. 
\end{proof}
By \cite[Lemma 4.7]{KL15}, we finally conclude that $\cD_{1}\times_{\Phi}\cD_{2}$ is quasiequivalent to $\cD_{1}\times_{\Phi'}\cD_{2}$ and therefore $\cT$ and $\cT'$ are equivalent.
\end{proof}

\subsection{Explicit geometric realisations}

Our main goal in this section is to construct smooth projective varieties $Y_{X,n},$ depending on $X,$  and a fully faithful functors $\cT_{X,n}\hookrightarrow D^b(Y_{X,n}).$
We start by finding geometric realisations for the bounded derived categories of representation of $A_n$-quivers by following the steps of \href{https://arxiv.org/pdf/1503.03174.pdf}{\cite{O15}}.

In \cite{O16}, Orlov gives geometric realisations for so-called geometric noncommutative schemes, this also encapsulates the categories in which we are interested.
However, our purposes require an explicit construction.

\bt \label{theorem_gr_quivers} \cite[Theorem 2.6]{O15}
Let $Q$ be a quiver with n ordered vertices.
Then there exists a smooth projective variety $Y_Q$ and an exact functor $u\colon \Rep_{\CC}(Q)\hookrightarrow \Coh(Y_{Q})$ such that the following conditions hold.
\begin{enumerate}
    \item The induced derived functor $u\colon D^b(\Rep_{\CC}(Q))\longrightarrow D^b(Y_Q)$ is fully faithful.
    \item The variety $Y_Q$ is a tower of projective bundles and has a full exceptional collection.
    \item Simple modules $S_{i}$ go to line bundles $L_{i,n}$ on $Y_{Q}$ under $u$. 
    \item any representation $M$ goes to a vector bundle on $Y_{Q}$.
\end{enumerate}
\et

The following example is important for our purposes.

\brem \label{remark_gr_an}
Let us consider the $A_{n}$ quiver.
After applying Theorem \ref{theorem_gr_quivers} we obtain a tower of projective bundles
\[Y_{n}\xlongrightarrow{\pi_{n}} Y_{n-1}\longrightarrow \dots Y_{1}\xlongrightarrow{\pi_{1}}\PP^1\]
and embeddings $u_n\colon D^b(A_{n})\hookrightarrow D^b(Y_{n})$, such that the simple modules $S_{i}$ are sent to line bundles $L_{n,i}\in \Coh(Y_{n})$ for $i=1,\dots, n.$
Following \cite[Theorem 2.6]{O15}, we have an inductive description $L_{n,i}=\pi^*_{n}(L_{n-1,i-1})$ for $i=2,..,n$ with $L_{n,1}=\cO_{Y_{n}}(-1)$ and $L_{n,n}=\cO_{Y_{n}}.$ 

We write $P_i$ for the projective indecomposable corresponding to $S_i$ for  $i=1, \dots ,n.$
Then we have the semiorthogonal decompositions:
$D^b(A_{n}) = \langle S_1 \,, D^b(A_{n-1}) \rangle = \langle D^b(A_{n-1}),P_1 \rangle,$ where $i\colon D^b(A_{n-1}) \rightarrow D^b(A_{n})$ is the fully faithful functor induced by adjoining a vertex on the left. 
We now consider the triangle $M_{n}\longrightarrow P_1 \longrightarrow S_1$ induced by the semiorthogonal decomposition $D^b(A_{n})= \langle S_1 \,, D^b(A_{n-1}) \rangle$ for $P_1.$
Applying $u_n$ to the triangle above, we have 
\[\pi_{n}^*\widetilde{M}_{n} \longrightarrow \cE_{n} \longrightarrow \cO_{Y_n}(-1) \enspace ,\]
where $M_{n} \in D^b(A_{n-1})$ such that $\pi^*_n(\widetilde{M}_{n}) = u_n(M_n)$ and $\cE_n = u_n(P_1)$.
\erem

Following Orlov's steps we now define a smooth projective variety $Y_{X,n},$ depending on $X,$ and a fully faithful functor $v_{n,i}\colon D^b(X)\hookrightarrow D^b(Y_{X,n})$ such that the triangulated category generated by the images of the $v_{n,i}$ has a semiorthogonal decomposition with components $v_{n,i}(D^b(X))$ and the gluing functors coincide with the one of $\cT_{X,n}.$
This allows us to apply Lemma \ref{lemma_dggluing} and obtain the result.

\bd
We define $Y_{X,n}\coloneqq Y_{n}\times X$ and $\widetilde{L}_{n,i}\coloneqq q_{n}^*L_{n,i}$ where $Y_{n}$ and $L_{n,i}$ are as in Remark \ref{remark_gr_an} and $q_n : Y_{X,n} \to Y_n$ is the projection.
\ed

There is the following commutative diagram:
\begin{equation}\label{eq:realisation-quivers}
\xymatrix{\eta^n \colon Y_{X,n} \ar[r]^{\eta_n} \ar[d]^{q_{n}} & Y_{X,n-1} \ar[r]^{\eta_{n-1}} \ar[d]^{q_{n-1}} & \cdots \ar[r] \ar[d]& Y_{X,1} \ar[r]^{\eta_{1}} \ar[d]^{q_1} & X\\
\pi^n\colon Y_{n} \ar[r]^{\pi_{n}}& Y_{n-1}\ar[r]^{\pi_{n-1}} & \cdots \ar[r]& Y_{1}=\PP^1 \enspace.}
\end{equation}
By \cite[Proposition 2.5]{O15}  $Y_{n}=\PP_{Y_{n-1}}(K_{n}),$ therefore we get that $Y_{X,n}=\PP_{Y_{X,n-1}}({q_{n-1}}^*K_{n}),$ where $K_{n}$ fits into the following short exact sequences:
For $n=2,$ we get $K_{2}=\cO_{\PP^1}(-2)\oplus \cO_{\PP^1}(-1)$ and 
\[0\longrightarrow K_{2} \longrightarrow (\cO_{\PP^1}(-1))^{\oplus 4} \longrightarrow \cO_{\PP^1} \longrightarrow 0 \enspace . \]
Subsequently for $n>2,$ we obtain the following short exact sequence in $\Coh(Y_{n-1})$
\[0\longrightarrow K_{n} \longrightarrow (\cO_{Y_{n-1}}(-1) \otimes \pi^*_{n-1}({R}^{-s}))^{\oplus m}\longrightarrow \cE^{\vee}_{n-1} \longrightarrow 0 \enspace ,\]
with $R$ a very ample line bundle on $Y_{n-1}$ and $m,s\in \ZZ_{>0}$.
As a consequence, we obtain a tower of projective bundles.

We then define the following functors
\begin{eqnarray*}  \nonumber
v_{n,i}\colon D^b(X)& \hookrightarrow & D^b(Y_{X,n}) \\ \nonumber
E &\mapsto & {\eta^n}^*(E)\otimes \widetilde{L}_{n,i} \enspace .
\end{eqnarray*}

It also follows from Remark \ref{remark_gr_an} that $\widetilde{L}_{n,i}=\eta^*_{n}(\widetilde{L}_{n-1,i-1})$ for $i=2,\dots,n$ and $\widetilde{L}_{n,1}=\cO_{Y_{X,n}}(-1)$.

\brem
By \cite[Lemma 2.1]{O92}, we have that the functor $\eta^*\colon D^b(X)\longrightarrow D^b(Y_{X,n})$ is fully faithful. 
The functor $v_{n,i}$ is the composition of two fully faithful functors; ${\eta}^*$ and an autoequivalence given by tensoring by a line bundle.
Therefore, the functor $v_{n,i}$ is also fully faithful.
By \cite[Assertion 2.4]{O92} we have that $D^b(X)$ is saturated, therefore $v_{n,i}(D^b(X))$ is an admissible triangulated subcategory.
Moreover, using the adjunction $({\eta^n}^*, \mathbf{R}{\eta_*^n}),$ we obtain that the right adjoint of $v_{n,i}$ is given by
\begin{eqnarray*}  \nonumber
v_{n,i}^!\colon D^b(Y_{X,n})& \hookrightarrow & D^b(X) \\ \nonumber
F &\mapsto & \mathbf{R}{\eta_{*}^n}(F\otimes \widetilde{L}_{n,i}^{\vee}) \enspace .
\end{eqnarray*}
\erem

\bt\label{theorem_gr_X,n}
There is a fully faithful functor $v_{n}\colon \cT_{X,n}\hookrightarrow D^b(Y_{X,n})$.
Moreover, for the standard semiorthogonal decomposition $\cT_{X,n}=\lin D^n_1,\dots,D^n_{n} \rin,$ the image of $\, D^n_i$ under $v_{n}$ is given by $v_{n,i}(D^b(X))$ and the following diagram commutes 
\begin{equation}
\xymatrix{ D^{n-1}_{i-1} \ar[r]^{\eta^*_n} \ar[d]^{v_{n-1,i-1}} & D^{n}_{i} \ar[d]^{v_{n,i}}\\
  D^b(Y_{X,n-1}) \ar[r]^{\eta^*_n} & D^b(Y_{X,n})  
 }
\end{equation}
for $i=2,\dots,n$.
\et

\begin{proof} 
We proceed by induction on $n.$
For $n=1,$ we have that $\cT_{X,1}=D^b(X)$ and $Y_{X,1}=X\times\PP^1=\PP_{X}(\cO_{X}\oplus \cO_{X}).$
As in the proof of Theorem \ref{theorem_gr_quivers}, it holds that $\widetilde{L_{1,1}}=\cO_{\PP^1}$ and by \cite[Lemma 2.1]{O92} the statement follows.

We now assume the assertion for $\cT_{X,n-1}.$
First note that we have already a fully faithful functor
\[\eta^*_{n}\circ v_{n-1}\colon \cT_{X,n-1} \longrightarrow D^b(Y_{X,n})\enspace .\]
Abusing notation, we identify image of this functor with $\cT_{X,n-1}$ itself. Our strategy is to show that $v_{n,1}(D^b(X))\subseteq \cT_{X,n-1}^{\perp}$ and that the triangulated category 
\[\lin v_{n,1}(D^b(X)),\cT_{X,n-1}\rin \subseteq D^b(Y_{X,n})\]
is equivalent to $\cT_{X,n}.$

Let us consider the standard semiorthogonal decomposition $\cT_{X,n-1}=\lin D^{n-1}_1,\dots, D^{n-1}_{n-1}\rin.$ 
In order to prove that $v_{n,1}(D^b(X))\subseteq \cT_{X,n-1}^{\perp},$ it is enough to show that 
\[\Hom_{D^b(Y_{X,n})}({\eta^n}^*(E)\otimes \widetilde{L}_{n,j},{\eta^n}^*(F)\otimes \widetilde{L}_{n,1})=0\] for all $j=2,\dots, n$ and $E, F\in D^b(X).$

Note that $\widetilde{L}_{n,1}=\cO_{Y_{X,n}}(-1)$. Applying the adjunction $({\eta_n}^*,\bR {\eta_n}_*)$ and the projection formula we have
\begin{eqnarray*}
&&\Hom_{D^b(Y_{X,n})}({\eta^n}^*(E)\otimes \widetilde{L}_{n,j},{\eta^n}^*(F)\otimes \cO_{Y_{X,n}}(-1)) \\
&=& \Hom_{D^b(Y_{X,n})}({\eta_n^*}({\eta^{n-1}}^*(E)\otimes(\widetilde{L}_{n-1,j-1})),{\eta^n}^*(F)\otimes \cO_{Y_{X,n}}(-1))\\
&=& \Hom_{D^b(Y_{X,n})}({\eta_n^*}({\eta^{n-1}}^*(E)\otimes(\widetilde{L}_{n-1,j-1})),{\eta_n^*}{\eta^{n-1}}^*(F)\otimes \cO_{Y_{X,n}}(-1))\\
&=& \Hom_{D^b(Y_{X,n-1})}({\eta^{n-1}}^*(E)\otimes\widetilde{L}_{n-1,j-1},{\eta^{n-1}}^*(F)\otimes \bR{\eta_n}_*(\cO_{Y_{X,n}}(-1)))\\
&=&0 \enspace .
\end{eqnarray*} 
The last equality following from the fact that $\bR{\eta_{n*}}(\cO_{Y_{X,n}}(-1))=0$, since $\eta_{n*}$ is a projective bundle.

Consider the two triangulated categories
\[\cT_{X,n}=\lin D_1^n,\cT_{X,n-1}\rin \textnormal{ and } \lin v_{n,1}(D^b(X)), \cT_{X,n-1} \rin\subseteq D^b(Y_{X,n})\]
and note that $\lin v_{n,1}(D^b(X)), \cT_{X,n-1} \rin$, as a full triangulated subcategory of $D^b(Y_{X,n})$, has a dg-enha- \newline ncement.
By Lemma \ref{lemma_uniquedgn} both $\cT_{X,n-1}$ and $D^b(X)$ have unique dg-enhancements.
Moreover, Lemma \ref{lemma_dggluing} implies that it is now enough to show that the gluing functor $v_{n,1}^!v_{n,2}\colon D^b(X)\longrightarrow D^b(X)$ between $D_1$ and $D_2$ in $D^b(Y_{X,n})$ is given precisely by $v_{n,1}^!v_{n,2}(E)=E[-1].$

Indeed, this is the case. Consider:
\begin{eqnarray*}
v_{n,1}^!v_{n,2}(E) & = & v_{n,1}^!({\eta^n}^*(E) \otimes \widetilde{L}_{n,2}) \\
 & = & \bR {\eta_*^n}( {\eta^n}^*(E)\otimes \widetilde{L}_{n,2} \otimes {\widetilde{L}}^{\vee}_{n,1}) \\
 & = & E\otimes \bR \eta_*^{n-1}\bR {\eta_n}_* (\widetilde{L}_{n,2}\otimes \cO_{Y_{X,n}}(1)) \\
 & = & E\otimes \bR {\eta_*^{n-1}}\bR {\eta_n}_*(\eta_n^*(\widetilde{L}_{n-1,1})\otimes \cO_{Y_{n}}(1)) \\
 & = & E\otimes \bR {\eta_*^{n-1}}(\widetilde{L}_{n-1,1}\otimes \bR{\eta_n}_*( \cO_{Y_{n}}(1))) \\
 & = & E\otimes \bR {\eta_*^{n-1}}(q_{n-1}^*(L_{n-1,1}\otimes K_{n})) \enspace .
\end{eqnarray*}
For $n=2,$ we have that $L_{1,1}=\cO_{\PP^1},$ therefore
\begin{eqnarray*}
E\otimes \bR {\eta_*^{1}}(q_{1}^*(L_{1,1}\otimes K_{1}))&=&E\otimes\bR {\eta_*^{1}}(q_{1}^*(K_{2}))\\
&=&E\otimes\bR {\eta_*^{1}}(\cO_{Y_{X,2}}(-2)\oplus \cO_{Y_{X,2}}(-1)^{\oplus 2})=E\otimes \cO_{X}[-1]=E[-1] \enspace .
\end{eqnarray*}
For $n>2,$ we have that
\begin{eqnarray*}
E\otimes \bR {\eta_*^{n-1}}(q_{n-1}^*(L_{n-1,1}\otimes K_{n})) 
&=& E\otimes \bR {\eta_*^{n-2}}(q_{n-2}^*\bR {\pi_{n-1}}_*(
\cO_{Y_{n-1}}(-1)\otimes K_{n})) \enspace ,
\end{eqnarray*}
by (\ref{eq:realisation-quivers}) and the fact that $L_{n-1,1}=\cO_{Y_{n-1}}(-1).$

Using the exact sequences from Remark \ref{remark_gr_an} and since $\rk(K_{n})>2$, we obtain two short exact sequences
\[0\longrightarrow \cO_{Y_{n-1}}\longrightarrow \cE^{\vee}_{n-1}\otimes \cO_{Y_{n-1}}(-1)\longrightarrow  \pi^*_{n}(\cE_{n-2}^{\vee}) \otimes \cO_{Y_{n-1}}(-1) \longrightarrow 0\enspace .\]
\[0\longrightarrow K_{n}\otimes \cO_{Y_{n-1}}(-1) \longrightarrow (\cO_{Y_{n-1}}(-2) \otimes \pi^*_{n}(R^{s}))^{m'}\longrightarrow \cE^{\vee}_{n-1}\otimes\cO_{Y_{n-1}}(-1) \longrightarrow 0 \enspace .\]
As $\bR{\pi_{n-1}}_*(\cO_{Y_{n-1}}(-1))=0,$
we get that $\bR{\pi_{n-1}}_*(\cE_{n-1}^{\vee}\otimes \cO_{Y_{n-1}}(-1))\cong \cO_{Y_{n-2}}.$
Moreover, as $\rk(K_{n-1})>2,$ $\bR{\pi_{n-1}}_*((\cO_{Y_{n-1}}(-2) \otimes \pi^*_{n-1}(R^{\otimes s}))^{m'})=0,$ and therefore 
\[\bR{\pi_{n-1}}_*(K_{n}\otimes \cO_{Y_{n-1}}(-1))=\bR{\pi_{n-1}}_*(\cE_{n-1}^{\vee}\otimes \cO_{Y_{n-1}}(-1))[-1]=\cO_{Y_{n-2}}[-1] \enspace.\]
As a consequence, 
\[v_{n,1}^!v_{n,2}(E) = E\otimes \bR {\eta_*^{n-2}}(q_{n-2}^* \bR {\pi_{n-1}}_*(
\cO_{Y_{n-1}}(-1)\otimes K_{n}))=E\otimes \bR {\eta_*^{n-2}}(\cO_{Y_{X,n-2}})[-1]=E[-1] \enspace .\]
\end{proof}

We conclude this section with two technical lemmas which will require.

\bl \label{lem_sod_orth}
There is a semiorthogonal decomposition of $v_{n}(\cT_{X,n})^{\perp}=\lin B_{1},\dots, B_{m}\rin\subseteq D^b(Y_{X,n}),$ such that $ B_{i}\cong D^b(X)$ for $i=1,\dots, m$ for some $m\in \NN$.
\el

\begin{proof}
The proof goes by induction on $n$.
Let $n=1$ and so $\cT_{X,1}\cong D^b(X)$ and $Y_{X,1}=X\times \PP^1=\PP_{X}(\cO_{X}\oplus \cO_{X}).$
Consider the embedding $w_{1}\colon D^b(X) \longrightarrow D^b(Y_{X,1})$ given by $E \mapsto {\eta^1}^*(E)\otimes \cO_{Y_{X,1}}(-1).$
By \cite[Theorem 2.6]{O92}, there is a semiorthogonal decomposition $D^b(Y_{X,1})=\lin w_{1}(D^b(X)),v_{1}(D^b(X)\rin$ and consequently $v_{1}(\cT_{X,1})^{\perp}\cong w_{1}(D^b(X))$.

Now assume the statement for $n-1.$
As $Y_{X,n}$ is a projective bundle over $Y_{X,n-1},$ by \cite[Theorem 2.6]{O92} there is $r\in \NN$ such that $D^b(Y_{X,n})=\lin C_{-r},\dots, C_{0}\rin$ where $C_{i}\cong D^b(Y_{X,n-1}).$
Moreover, the category $C_{0}$ is the image under the functor $\eta^*_{n}\colon D^b(Y_{X,n-1})\rightarrow D^b(Y_{X,n})$ and $C_{-1}$ is the image under the functor $\eta^*_{n}(-)\otimes \cO_{Y_{X,n}}(-1).$ 
Also note that $v_{n}(\cT_{X,n})=\lin D_{{n-1}_{-1}}^{n-1} , (\cT_{X,n-1})_{0} \rin,$ where  $(\cT_{X,n-1})_{0}$ is the image of the embedding $\cT_{X,n-1} \hookrightarrow C_0$ and  $\cT_{X,n-1}=\lin D^{n-1}_1,\dots ,  D^{n-1}_{n-1} \rin$ is the standard semiorthogonal decomposition.

We also have that $D^b(Y_{X,n-1})=\lin v_{n-1}(\cT_{X,n-1})^{\perp},v_{n-1}(\cT_{X,n-1})\rin$ and hence by the induction hypothesis it holds that
$\cT_{X,n-1}^{\perp}=\lin B_1,\dots, B_s\rin $ with $B_i\cong D^b(X)$ for all $i=1,\dots, s.$ 
As a consequence, we have that
$C_j=\lin B_{1_{j}},\dots, B_{s_{j}}, D^{n-1}_{1_j},\dots D^{n-1}_{{n-1}_j} \rin.$  Therefore we have that $D^b(Y_{X,n})$ splits up into:
\[\lin B_{1_{-r}},\dots, B_{s_{-r}}, D^{n-1}_{1_{-r}},\dots D^{n-1}_{{n-1}_{-r}},\dots, B_{1_{-1}},\dots, B_{s_{-1}}, D^{n-1}_{1_{-1}} ,\dots , D^{n-1}_{{n-1}_{-1}},  B_{1_{0}},\dots, B_{s_{0}}, D^{n-1}_{1_{0}},\dots , D^{n-1}_{{n-1}_{0}}\rin \, \enspace .\]
where $D^{n-1}_{{n-1}_{-1}}$ is precisely the image of $v_{n,1}\colon D^b(X)\rightarrow D^b(Y_{X,n}).$

We can then mutate the semiorthogonal decomposition given above.
Indeed, consider the functor 
\begin{eqnarray*}\nonumber
\mathbb{L}_{D^{n-1}_{{n-1}_{-1}}}\colon  D^b(Y_{X,n}) & \longrightarrow & D^b(Y_{X,n})\\ \nonumber
 F & \longmapsto & C(i_*i^!(F)) \enspace , \nonumber
\end{eqnarray*}
where $i\colon D^{n-1}_{{n-1}_{-1}} \hookrightarrow D^b(Y_{X,n})$ is the inclusion and we mutate as in \cite[Section 2.4]{K09}.

We obtain
\[D^b(Y_{X,n})=\lin B_{1_{-r}},\dots, B_{s_{-r}}, D^{n-1}_{1_{-r}},\dots D^{n-1}_{{n-1}_{-1}},\dots, B_{1_{-1}},\dots,  \mathbb{L}_{D^{n-1}_{{n-1}_{-1}}}(B_{1_{0}}),D^{n-1}_{{n-1}_{-1}},\dots, B_{s_{0}}, D^{n-1}_{1_{0}},\dots D^{n-1}_{{n-1}_{0}}\rin\enspace .\]
In addition, the functor $\mathbb{L}_{D^{n-1}_{{n-1}_{-1}}}$
induces an equivalence $^{\perp}{D^{n-1}_{{n-1}_{-1}}}\mapsto {D^{n-1}_{{n-1}_{-1}}}^{\perp},$ as a consequence \[\mathbb{L}_{D^{n-1}_{{n-1}_{-1}}}(B_{1_{0}})\cong D^b(X)\enspace .\]
By mutating repeatedly as above, we obtain 
\[D^b(Y_{X,n})=\lin B_{1_{-r}},\dots, B_{s_{-r}}, D^{n-1}_{1_{-r}},\dots D^{n-1}_{{n-1}_{-1}},\dots, B_{1_{-1}},\dots,  B_{1_{0}}',\dots, B'_{s_{0}},D^{n-1}_{{n-1}_{-1}}, D^{n-1}_{1_{0}},\dots D^{n-1}_{{n-1}_{0}}\rin\enspace .\]
with $B'_{i_{0}}\cong D^b(X)$.

Since $v_n(\cT_{X,n})=\lin D^{n-1}_{{n-1}_{-1}}, D^{n-1}_{1_{0}},\dots , D^{n-1}_{{n-1}_{0}} \rin $ we have that
\[v_{n}(\cT_{X,n})^{\perp}=\lin B_{1_{-r}},\dots, B_{s_{-r}}, D^{n-1}_{1_{-r}},\dots D^{n-1}_{{n-1}_{-1}},\dots, B_{1_{-1}},\dots,  B_{1_{0}}',\dots, B'_{s_{0}} \rin\]
as desired.
\end{proof}

\brem \label{rem_strongfca}
Let $Y_{X,n}$ as in Theorem \ref{theorem_gr_X,n}, then $\cT_{X,n}$ is an admissible subcategory of $D^b(Y_{X,n}).$ Indeed,  note that $\cT_{X,n}$ is saturated by \cite[Proposition 2.10]{BK89} and therefore $\cT_{X,n}$ is admissible by \cite[Proposition 2.6]{BK89}. 
\erem

\section{Moduli of Bridgeland semistable holomorphic chains}

We begin by introducing a moduli stack analogous to the moduli stack defined and studied by Lieblich \cite{L05}.
These moduli stacks were first studied by Bayer et. al \cite[Section 9]{BLMNPS20} and our notation follows theirs.

Let $Y$ be a smooth projective variety.
From this point on $\cT\subseteq D^b(Y)$ is always an admissible subcategory. 
See Remark \ref{rem_strongfca_general}.

\bd \label{def_Ugluable}
Let $S$ be a $\CC$-scheme. We say that an $S$-perfect (in the sense of \cite[Definition 8.1]{BLMNPS20}) object $E\in D(Y \times S)$ is \emph{universally gluable} if $\Ext^i(E_s,E_s)=0$ for every $i<0$ and $\CC$-point $s \in S$.
We write $D_{\pug}(Y \times S) \subset D(Y \times S)$ for the subcateogry consisting of universally gluable objects in $D(Y \times S)$.
\ed

\brem
Note that by \cite[Lemma 8.3]{BLMNPS20}, we have that $D_{\pug}(Y\times S)\subseteq D^b(Y\times S).$
Moreover, if $S$ is smooth then if $E\in D^b(Y\times S)$ implies that $E$ is $S$-perfect. 
\erem

\bd \label{def_moduli_functor_pug}
We denote
\[\cM_{\pug}(\cT) \colon \Sch/\CC \longrightarrow \Gpds\]
the functor whose value on a $\CC$-scheme $S$ is the set of all $E \in D_{\pug}(Y \times S)$ such that $E_s \in \cT$ for all $s \in S$.
The groupoid structure is given by the standard notion of equivalence: $E \sim E'$ if there exists a line bundle $L \in \Pic(S)$ such that $E \cong E' \otimes q^*L$, where $q \colon Y \times S \rightarrow S$.
\ed

The following is a version of \cite[Theorem 4.2.1]{L05}.

\bp \cite[Proposition 9.2]{BLMNPS20} \label{prop_monster_9.2}
The functor $\cM_{\pug}(\cT)$ is an algebraic stack locally of finite type over $\CC.$
\ep

We now define the moduli stacks which are of our primary interest.

\bd
Let $\sigma \in \Stab_{\Lambda}(\cT)$. We define $\cM^{\beta,\phi}(\sigma)$ to be the substack of $\cM_{\pug}(\cT)$ parameterising the set of $\sigma$-semistable objects of phase $\phi$ and class $\beta$.
In particular
\[\cM^{\beta,\phi}(\sigma)(S) = \left\{ E \in \cM_{\pug}(S) \,\, | \,\, E_s \,\, \sigma\textrm{-semistable, class } \beta\in \Lambda, \,\, \textrm{phase } \phi \right\}\enspace .\]
The groupoid structure is given by the standard notion of equivalence.
\ed

The remainder of this section will be concerned with studying these moduli stacks in the case $\cT = \cT_{X,n}$, where $X$ is a smooth projective variety.
The ultimate aim is prove that the moduli spaces $\cM^{\beta,\phi}(\sigma)$ are algebraic stacks of finite type.
The algebracity will follow from the \emph{open heart property} and \emph{generic flatness} which are addressed in respective sections below.
That the stack is of finite type will follow from \emph{boundedness}.

Let $C$ be a  curve.
For certain stability conditions on $\cT_{C,n}$ we can prove all three of the above properties and in the case of holomorphic triples $\cT_{C}$ for the entire stability manifold.
Furthermore, we can prove some partial results for $\cT_{X,n}$.

\subsection{Boundedness}

As before, let $Y$ be a smooth projective variety and $\cT\subseteq D^b(Y)$ be an admissible subcategory.

\bd A set of objects $B \subseteq \cT$ is
called \emph{bounded} if there is a $\CC$-scheme of finite type $S$, and an object
$E \in \cT_{S}$ such that any object in $B$ is isomorphic to $E_s$ for some $\CC$-point $s \in  S$.
If this rather holds for some $E \in D^b(Y \times S)$ we say that $B$ is bounded in $D^b(Y)$.
\ed

The following lemma states that if boundedness holds for one stability condition on the stability manifold, it must hold for the entire stability manifold.
The result appears in \cite[Theorem 4.2]{PT19}.
We provide a proof for the sake of completeness.

\bl \label{lemma_boundedness_connected} 
Let $\Stab_{\Lambda}^{\circ}(\cT)\subseteq \Stab_{\Lambda}(\cT)$ be a connected component. 
If $\cM^{\beta,\phi}({\sigma})$ is bounded for $\sigma\in \Stab_{\Lambda}^{\circ}(\cT)$ an algebraic stability condition. 
Then, for any $\tau\in \Stab_{\Lambda}^{\circ}(\cT),$ we have that $\cM^{\beta, \phi}(\tau)$ is also bounded for all $\beta\in \Lambda$  and $\phi \in \mathbb{R}.$
\el

\begin{proof}
Let us consider the slicings $\mathcal{P}$ and $\mathcal{Q}$ of $\sigma$ and $\tau$ respectively. 
By connecting $\sigma$ to $\tau$ via a path, we can assume that
\[d(\cP,\cQ)=\rm{inf}\{\delta\in \mathbb{R} \,\, | \,\, \cQ(\phi)\subseteq \cP([\phi-\delta, \phi+\delta]) \textnormal{ for all }\phi \in \mathbb{R}\}=\epsilon <\frac{1}{8}\enspace ,\]
where $d$ is the metric on the stability manifold, see \cite{B07}.
Hence
\[\cQ(\phi)\subseteq \cP((\phi-\epsilon, \phi+\epsilon))\enspace .\]

We now show that $\cM^{\beta,\phi}(\tau)$ is bounded.
Let $E\in \cQ(\phi)$ with $[E]\in \Lambda.$
The semistable factors $F_i$ of $E$, with $1\leq i \leq n_{E}$, satisfy $\phi-\epsilon<\phi_{\sigma}(F_{i})< \phi+\epsilon.$
Since $\sigma$ is algebraic and $\epsilon$ small enough, we have that the map $\cM^{\beta,\phi}({\tau})\rightarrow \NN,$ given by $E\mapsto n(E)$ is bounded. 
As a consequence, the set
\[\{Z(F_i) \,\, | \,\, 1\leq i \leq n_{E}, \textnormal{ with }E\in \cM^{\beta, \phi}(\tau) \}\] 
is finite. 

Let $\phi_{i}\coloneqq\phi(F_i)$ for $i=1,\dots,n.$
Since $\cM^{\beta, \phi_{i}}(\sigma)$ is bounded and applying \cite[Lemma 9.8]{BLMNPS20}, we get that $\cM^{\beta, \phi}(\tau)$ is bounded.
\end{proof}

\bl\label{remark_sem_vectorbundles}
Let $C$ be a  curve, if $E\in \cP_{\sigma_{\alpha}}(\phi)$ for $\sigma_{\alpha}=(Z_{\alpha},\cQ_{C,n})\in\Stab(\cT_{C,n})$ as in Examples \ref{ex_alpha} and $0<\phi<1$.
Then $E_i$ is torsion free for $i=1,\dots,n$.
\el

\begin{proof}
Let $E\in \cQ_{C,n}$ be a $\sigma_{\alpha} $-semistable holomorphic triple.
We can decompose $E_n=T(E_n)\oplus F(E_n),$ where $T(E_n)$ is the torsion part of $E_n$ and $F(E_n)$ the torsion-free part.
Note that ${i_{n}}(T(E_{n}))$ is a subchain of $E$ and $\phi_{\sigma_{\alpha}}({i_{n}}(T(E_{n})))=1$, where $i_n$ is as in Section \ref{sec:CP_rec}.
This contradicts the semistability of $E$.
Therefore $E_{n}$ is torsion free.

Note that the chain 
\[T_{n-1}=0 \longrightarrow \cdots \longrightarrow T(E_{n-1})\longrightarrow 0\]
is a subchain of $E$ with $\phi(T_{n-1})=1$.
Again contradicting semistability and hence $T(E_{n-1})=0$.
Analogously, we prove that $E_{i}$ is torsion free for all $i=1,\dots, n-2$.
\end{proof}

\bl \label{lemma_bounded_alpha}
Let $C$ be curve and let $\sigma_{\alpha}\in \Stab(\cT_{C,n})$, as in Example \ref{ex_alpha} with $\alpha\in\QQ^n$.
Then $\cM^{\beta, \phi}(\sigma_{\alpha})$ is bounded for all $\beta\in \ZZ^{2n}$ and $0<\phi<1$.
\el

\begin{proof}
The moduli space of $\sigma_{\alpha}$-semistable holomorphic chains of vector bundles was constructed in \cite[Theorem 1.6]{S03}.
Therefore, by Lemma \ref{remark_sem_vectorbundles} we obtain that $\cM^{\beta, \phi}(\sigma_{\alpha})$ is bounded.
\end{proof} 

\brem
Under the assumptions of Lemma \ref{lemma_bounded_alpha}, if $E\in \cP_{\sigma_{\alpha}}(1)$, we have that $\sum^{n}_{i=1}r_{i}=0$.
As $r_i\geq0$ for all $i,$ we get that $r_i=0.$
Therefore $E_i$ is a torsion sheaf for each $i$.
After fixing $\beta=(0_{i},d_i)_{i=1,\dots,n}$ we get $\cM^{\beta, 1}(\sigma_{\alpha})\cong \Sym^{n}(C)$ and is bounded.
\erem 

The following corollary holds under the assumption that the support property is satisfied for $\sigma_{\alpha}$.
Let $\Stab^{\circ}(\cT_{C,n})$ be the connected component containing $\sigma_{\alpha}$.

\bc \label{cor_boundedness_chains}
Let $C$ be a curve and $\Stab^{\circ}(\cT_{C,n})$ the connected component containing $\sigma_{\alpha}.$
Then for every $\sigma\in \Stab^{\circ}(\sigma_{\alpha}),$ the set $\cM^{\beta, \phi}(\sigma)$ is bounded for every $\beta\in 
\ZZ^{2n}$ and $\phi\in \RR.$
\ec

\bc \label{cor:boundedness_curve_triples}
For $\sigma\in \Stab(\cT_{C}),$ then $\cM^{\beta, \phi}(\sigma)$ is bounded for all $\beta\in \ZZ^4$ and all $\phi\in \RR.$
\ec
\begin{proof}
It is proven in \cite[Theorem 1.1]{MRRHR20} that $\Stab(\cT_{C})$ is connected.
The statement follows from Lemmas \ref{lemma_bounded_alpha} and \ref{lemma_boundedness_connected}.
\end{proof}

\subsection{The open heart property}

We begin by stating the open heart property.

\bd\label{def:OHC}
Let $S$ be a $\CC$-scheme of finite type and $\cA \subset \cT$ a noetherian heart. 
We say that $\cA$ satisfies the \emph{open heart property} if for every $E \in \cT_S$ and smooth $\CC$-point $s \in S$ with $E_s \in \cA$, there exists an open neighbourhood $s \in U \subset S$ such that $E_U \in \cA_U$.
\ed

In the case of $D^b(X)$ the open heart property was proven by Abramovich and Polishchuk.

\bt \label{prop_OHCX} \cite[Proposition 3.3.2]{AP06}
Let $\cA \subseteq D^b(X)$ be a noetherian heart.
Then $\cA$ satisfies the open heart property.
\et

The rest of the section will be concerned with proving the analogous result in our setting:

\bt \label{thm:OHC}
Any noetherian heart $\cA \subset \cT_{X,n}$ satisfies the open heart property.
\et

\begin{lemma}\label{lemma:heart_geom_cat}
Suppose that $\cT = \langle D_1, D_2 \rangle$ is a semiorthogonal decomposition with $D_1 \cong D^b(X_1)$ with $X_1$ a smooth projective variety.
If $D_2$ admits a noetherian heart, then $\cT$ also admits a noetherian heart.
\end{lemma}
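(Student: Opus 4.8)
The natural strategy is to glue a heart of $D_1 \cong D^b(X_1)$ to a given noetherian heart of $D_2$ using Proposition~\ref{CP}, and then invoke the first item of Remark~\ref{Rem_filtrationCP}, which says that a gluing of two noetherian hearts is noetherian. So the real content is to produce, on the $D^b(X_1)$-side, a heart whose position (relative to the fixed heart $\cB_2$ of $D_2$) satisfies the gluing inequality
\[
\Hom_{\cT}^{\le 0}(i_1 \cA_1, i_2 \cB_2) = 0 \enspace .
\]
First I would fix any noetherian heart $\cB_2 \subseteq D_2$ (assumed to exist). Then I would take the standard heart $\Coh(X_1) \subseteq D^b(X_1)$ and observe that the obstruction groups $\Hom_{\cT}^{-k}(i_1 \cA_1, i_2 \cB_2)$, as $k \ge 0$ varies, involve only finitely many nonzero contributions because $i_1$, $i_2$ are the inclusions of admissible components and everything lives in the bounded derived category $\cT$; more precisely the gluing functor $i_1^! i_2 \colon D_2 \to D^b(X_1)$ has finite cohomological amplitude (this is automatic here, as in Remark~\ref{rem_strongfca_general}, since $D_1$ is a geometric component of a finite-amplitude semiorthogonal decomposition). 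Concretely, $\Hom_{\cT}^{\le 0}(i_1 \cA_1, i_2 \cB_2) = \Hom^{\le 0}_{D^b(X_1)}(\cA_1, i_1^! i_2 \cB_2)$, and since $i_1^! i_2 \cB_2$ lands in some bounded range $D^b(X_1)^{[-N, M]}$ for a fixed $N$, it suffices to choose $\cA_1$ so that $\Hom^{\le 0}_{D^b(X_1)}(\cA_1, D^b(X_1)^{\ge -N}) = 0$.

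The cleanest way to arrange this is to replace $\Coh(X_1)$ by a shift: set $\cA_1 \coloneqq \Coh(X_1)[N+1]$, i.e.\ the heart of the shifted standard t-structure $(D^b(X_1)^{\le 0}[N+1], D^b(X_1)^{\ge 0}[N+1])$. Then $\cA_1 \subseteq D^b(X_1)^{\le -N-1}$, while $i_1^! i_2 \cB_2 \subseteq D^b(X_1)^{\ge -N}$; by the defining property of a t-structure, $\Hom^{j}_{D^b(X_1)}(\cA_1, i_1^! i_2 \cB_2) = 0$ for all $j \le 0$, which is exactly the gluing condition. (Shifting a noetherian heart keeps it noetherian, so $\cA_1$ is still noetherian — and in any case we only need the \emph{output} heart on $\cT$ to be noetherian.) Now apply Proposition~\ref{CP} with $(\cA_1, \cB_2)$: this produces a heart $\cA \coloneqq \gl(\cA_1, \cB_2)$ of a bounded t-structure on $\cT$. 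Finally, by Remark~\ref{Rem_filtrationCP}(1) (gluing of two noetherian hearts is noetherian), $\cA$ is noetherian, which is what we wanted.

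\textbf{Main obstacle.} The one step that needs genuine care — rather than being a formal consequence of the definitions — is the claim that $i_1^! i_2$ sends $\cB_2$ into a \emph{fixed} bounded range $D^b(X_1)^{\ge -N}$ with $N$ independent of the object, so that a single shift $\cA_1 = \Coh(X_1)[N+1]$ works simultaneously for all of $\cB_2$. This is precisely the finite-cohomological-amplitude property of the semiorthogonal decomposition $\cT = \langle D_1, D_2 \rangle$, which in the geometric situation of the paper ($Y = Y_{X,n}$ a smooth projective variety, all components admissible) is guaranteed by \cite[Lemma 2.9]{K11} as recorded in Remark~\ref{rem_strongfca_general}; I would spell out that the right-adjoint projection $i_1^!$ has finite amplitude and $i_2$ is exact for the standard t-structures, hence the composite $i_1^! i_2$ has finite amplitude, which gives the required uniform bound $N$. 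With that in hand the rest is a direct application of Proposition~\ref{CP} and Remark~\ref{Rem_filtrationCP}, and no further computation is needed. (This lemma will then feed the proof of Theorem~\ref{thm:OHC} via a recollement heart $\widetilde{\cA} \subseteq D^b(Y_{X,n})$ containing a given noetherian $\cA \subseteq \cT_{X,n}$, exactly as announced in the introduction.)
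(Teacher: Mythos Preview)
Your proposal is correct and follows essentially the same approach as the paper: shift $\Coh(X_1)$ far enough to force the gluing condition of Proposition~\ref{CP}, then invoke Remark~\ref{Rem_filtrationCP}(1). The paper runs the adjunction the other way---it uses the left adjoint $i_2^* i_1$ rather than your $i_1^! i_2$---and for the uniform amplitude bound it cites \cite[Proposition 2.5]{K08} (which gives $i_2^* i_1(\Coh(X_1)) \subseteq D_2^{[a,b]}$ with respect to the given heart on $D_2$); note that Remark~\ref{rem_strongfca_general} only speaks of the \emph{standard} t-structures, so it does not directly yield your bound $i_1^! i_2(\cB_2) \subseteq D^b(X_1)^{\ge -N}$, but by the adjunction $i_2^* i_1 \dashv i_1^! i_2$ your bound is equivalent to the paper's, hence \cite[Proposition 2.5]{K08} is the correct reference for your formulation as well.
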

\begin{proof}
Let $\cA \subset D_2$ be a noetherian heart.
We claim that 
\[\cA' = \gl \left(\Coh(X_1)[m] \, , \, \cA\right)\]
for $m>>0$ satisfies the gluing conditions given in Proposition \ref{CP}.
Assuming this claim to be true it would follow from Remark \ref{Rem_filtrationCP} that $\cA'$ is noetherian and hence would conclude the proof.

Let us prove the claim.
It suffices to show
\[\Hom_{\cT}^{\leq0}\left(i(\Coh(X))[m],j(\cA)\right)= 0\enspace ,\]
where $i: D_1 \to \cT$ and $j: D_2 \rightarrow \cT$ are the inclusions.
Consider $E \in \Coh(X)$ and $F \in \cA$, then
\[\Hom_{\cT}\left(i(E),j(F)\right) = \Hom_{D_2}\left(j^*i(E),F\right)\enspace .\]
Consider the composition $j^*i : D_1 \to D_2$.
By \cite[Proposition 2.5]{K08}, there are $a,b\in \ZZ$ such that for all $E\in \Coh(X)$ we have that $j^*i(E)\in D^{[a,b]}$, where $(D^{\leq 0},D^{\geq 0})$ is the t-structure given by $\cA$ on $D_2.$
Now we choose $m>>0$ such that $\Hom^{\leq 0}_{D_2}(j^*i(E)[m],F) = 0$.
\end{proof}

The next step towards proving the open heart property is to prove that the recollement of two hearts is stable under base change.
By this we mean that one can either base change two hearts and then consider the recollement or first take the recollement of two hearts and then base change; the resulting heart is the same.

\bp \textup{[Recollement is stable under base change]} \label{prop:rec_base_change}
Let $j\colon \cT\hookrightarrow D^b(Y)$ be an admissable triangulated subcategory, where $Y$ is a smooth projective variety, with $\cT=\lin D_1,D_2\rin$ and $S$ be a projective variety.
Let $\cA^{1}\subseteq D_1$ and $\cA^{2}\subseteq D_2$ be hearts of bounded t-structures.
Then
\[\rec(\cA^1_S , \cA^2_S) = \rec(\cA^1,\cA^2)_S\enspace .\]
Moreover, if $\cA^1$ and $\cA^2$ satisfy gluing conditions, for $E \in \rec(\cA^1,\cA^2)_S$ we have that ${i_2}^!(E) \in \cA^2,$ where $i_2\colon D_2 \hookrightarrow \cT$ is the inclusion functor.
\ep

\begin{proof}
It suffices to show that $\rec(\cA^1_S , \cA^2_S) \subseteq \rec(\cA^1,\cA^2)_S$, since inclusion of hearts implies equality.
For brevity we write $\phi_n = \ \bR p_*(- \otimes (q^*L)^n)$ where $L$ is an ample line bundle on $S$.

We first note that $F \in \rec(\cA^1,\cA^2)_S$ if and only if $\phi_n F \in \rec(\cA^1,\cA^2)$ for all $n \gg 0$ and that this happens if and only if we have the following two exact triangles:
\begin{equation}\label{eq:rec_exact_triangles}
    F_2 \rightarrow \phi_n F \rightarrow F_1 \rightarrow F_2[1]\quad\quad\quad
    F'_3 \rightarrow \phi_n F \rightarrow F_2 \rightarrow F'_3[1] \enspace,
\end{equation}
for some $F_1 \in \cA^1$, $F_2 \in D_2^{\geq 0}, F'_2 \in D^{\leq 0}_{2},$ where $(D^{\leq 0}_{j},D_j^{\geq 0})$ is the t-structure with heart $\cA^{j}$ for $j=1,2,$ and $F'_3\in {^{\perp}} D_2$.
Note that these triangles are unique.

Suppose now that $E \in \rec(\cA^1_S , \cA^2_S)$.
Thus we have the exact triangle
\[E_2 \rightarrow E \rightarrow E_1 \rightarrow E_2[1] \enspace ,\]
where $E_2 = j^!E \in (D_2^{\leq 0})_S$ and $E_1 \in \cA^1_S$.
Applying the functor $\phi_n$ we get
\[\phi_n E_2 \rightarrow \phi_n E \rightarrow \phi_n E_1 \rightarrow \phi_nE_2[1] \enspace .\]
Choosing $n$ large enough allows us to conclude that $\phi_n E_1 \in \cA^1$ and $\phi_n E_2 \in D_2^{\leq 0}$.
This recovers the first exact sequence from (\ref{eq:rec_exact_triangles}) for $\phi_n E$. 

Similarly, consider the triangle
\[E_3 \rightarrow E \rightarrow E_2' \rightarrow E_3[1]\]
with respect to the decomposition $\cT_{S} = \langle ({D_2})_S,({^{\perp}}{D_2})_S\rangle$.
Again, $E'_2 \in (D_2^{\leq 0})_S$ and hence, for $n \gg 0$, we have that $\phi_n E'_2 \in D_2^{\leq 0}$.

To argue that $\phi_n E_3 \in {^{\perp} D_2}$ we note that by \cite[Proposition 2.1.3]{AP06} we have $E_3 \otimes q^*L \in ({^{\perp}D_2)_S}$ for any $L$.
Given that $S$ is projective,
\[\bR p_*(({^{\perp}D_2})_S) \subset {^{\perp}D_2}\]
by Proposition \ref{prop_base_change_sod}. 
We now assume that $\rec(\cA^1,\cA^2)=\gl(\cA^1,\cA^2)$ and $E\in\rec(\cA^1,\cA^2)_{S},$ in this case, for $n\gg 0$ we get that ${i_2}^!(\phi_{n}(E))=\phi_n{E_2}\in \cA^2$, which is precisely the definition of ${i_2}^!E=E_2 \in \cA_{S}^2$.
\end{proof}

\bc \label{corollary_base_change_gluing}
Let $\cB\subseteq D^b(X)$ be the heart of a bounded t-structure.
If  $\cA=\gl(\cB,\dots,\cB)\subseteq \cT_{X,n},$ then $\cA_{S}=\gl(\cB_{S},\dots,\cB_{S})$.
\ec

\bdem
Note that if we take the same heart in each component the gluing conditions are automatically satisfied.
The result follows from the recursive definition of $\cA,$ Proposition \ref{prop:rec_base_change} and \cite[Proposition 2.8.12]{MR18}.
\edem

\bc\label{cor:rec_intersection_heart}
Suppose that $\cT = \langle D_1,D_2 \rangle$ and $\cA^i \subset D_i$ are  hearts of bounded t-structures such that $\widetilde{\cA} = \rec (\cA^1,\cA^2)$.
Then
\[(\cA^2)_S = (D_2)_S \cap \widetilde{\cA}_S \enspace .\]
\ec

\begin{proof}
It follows directly from the definition of recollement $\cA^2=D_2\cap \rec(\cA^1,\cA^2),$ then the statement follows directly from Proposition \ref{prop:rec_base_change}.
\end{proof}

Let $\cA\subseteq \cT_{X,n}$ be a noetherian heart and consider the realisation $Y_{X,n}$ as in Theorem \ref{theorem_gr_X,n}.
By Lemma \ref{lem_sod_orth}, there is a decomposition $\cT^{\perp}_{X,n}=\lin \cB_1,\dots, \cB_{m} \rin$ with $\cB_{i}\cong D^b(X).$
Recursively applying Lemma \ref{lemma:heart_geom_cat} we construct a noetherian heart
\[\widetilde{\cA}=\gl(\cB,\cA)\subseteq D^b(Y_{X,n})\]
with $\cB = \gl\left( \Coh(X)[n_1], \dots ,\Coh(X)[n_k] \right) \subseteq {\cT_{X,n}}^{\perp}.$

\bc \label{cor_Li_rightexact}
Let $\cA\subseteq \cT_{X,n}$ be the noetherian heart of the t-structure $(D^{\leq 0},D^{\geq 0})$ on $\cT_{X,n}$.
Then the functor $\bL i^*_{s}$ is right t-exact with respect to $(D^{\leq 0}_{S}, D^{\geq 0}_{S})$ and $(D^{\leq 0}, D^{\geq 0})$.
\ec

\bdem
Let $E\in D^{\leq 0}_{S}.$
By the definition in Theorem \ref{thm:BBDheart} we have that $E\in \widetilde{D}^{\leq 0}_{S}$, where $(\widetilde{D}^{\leq 0},\widetilde{D}^{\geq 0})$ is the t-structure with heart $\widetilde{\cA}.$
It follows from \cite[Lemma 2.5.3]{AP06} that $\bL i^*_{s}(\widetilde{D}^{\leq 0}_{S})\subseteq \widetilde{D}^{\leq 0}.$ Note that  $\bL i^*_{s}((\cT_{X,n})_{S})\subseteq \cT_{X,n}$.
Consequently $\bL i^*_{s}(E)\in \widetilde{D}^{\leq 0}\cap \cT_{X,n}=D^{\leq 0}.$
\edem

We also get a result analogous to \cite[Lemma 2.6.2]{AP06}. 

\bc\label{cor_Lem2.6.2AP}
Let $E\in \cA_{S},$ then there is $H\in \cA$ and $n\in \ZZ$ such that $p^*(H)\otimes q^*(L)^n\twoheadrightarrow E$ in $\cA_{S}.$
\ec

\bdem
By the definition of recollement $E\in \widetilde{\cA},$ after applying \cite[Lemma 2.6.2]{AP06} there is $G\in \widetilde{\cA}$ and $n\in \ZZ$ such that $p^*(G)\otimes L^n\twoheadrightarrow E$ in $\widetilde{\cA}_{S}.$
By applying $j^!,$ we obtain $p^*(j^!(G))\otimes q^*(L)^n\twoheadrightarrow E$ in $\cA_{S}$. 
Indeed, it holds that $j^!(\widetilde{\cA})\subseteq \cA$ and $j^!(p^*(G)\otimes L^n)=p^*(j^!(G))\otimes q^*(L)^n$.
Therefore, take $H=j^!(G)\in \cA$.
\edem 

Now we are in a position to prove Theorem \ref{thm:OHC}.

\begin{proof}[Proof of Theorem \ref{thm:OHC}] 
Let $E\in (\cT_{X,n})_{S},$ such that $E_{s}\in \cA\subseteq \widetilde{\cA}.$
Theorem \ref{prop_OHCX} ensures the open heart property is satisfied for the heart $\widetilde{\cA}_{S}.$
Thus there is an open set $s\in U\subseteq S$ such that $E_{U}\in \widetilde{\cA}_{U}$ and by definition $E_{U}\in (\cT_{X,n})_{U}.$
Since hearts constructed via gluing coincide with those constructed via recollement as in Remark \ref{rem_glu_rec}, by Corollary \ref{cor:rec_intersection_heart} we then have that $E_{U}\in \cA_{U}.$
\end{proof}

\brem \label{rmk:OHC-general}
Using the techniques above, one can prove the open heart property for noetherian hearts for a wider class of triangulated categories $\cT$.
That is, where $\cT$ admits a realisation $D^b(Y)$ and $\cT^{\perp}$ is \emph{geometric} i.e.\ admit semiorthogonal decompositions with semiorthogonal components given by $D^b(X_i)$ for various smooth projective $X_i$.
\erem

\subsection{Generic flatness}
Let $Y$ be a smooth projective variety and $\cT\subseteq D^b(Y)$ an admissible subcategory.

\bd \label{def_t-flat}
Let $\cA\subseteq \cT$ be the heart of a bounded t-structure.
We say that $E\in \cA_{S}$ is \emph{t-flat} if for every $s\in S,$ we have that $E_{s}\in \cA.$
\ed

\bd \label{def:GF}
Let $\cA\subseteq \cT$ be a heart of a bounded t-structure, we say that $\cA$ satisfies the \emph{generic flatness property} if for all $\cE\in \cA_{S},$ where $S$ is a projective variety, there is an open set $U\subseteq S$ such that for all $s\in U,$ we have that $\cE_{s}\in \cA.$  
\ed

As in \cite[Proposition 3.5.3]{AP06}, using the techniques of the proof of the open heart property, we can immediately provide a partial result of generic flatness of a heart $\cA \subset \cT_{X,n}$.

\bp
For $E\in \cA_{S},$ then there is a dense set $Z\subseteq S$ such that $E_{s}\in \cA$ for every $s\in Z.$ 
\ep

\bdem
We consider $\cA\subseteq \widetilde{\cA}$ as in the proof of Theorem \ref{thm:OHC}.
Let $E\in \cA_{S}\subseteq \widetilde{\cA}_{S}$.
By \cite[Proposition 3.5.3]{AP06}, there is a dense set $Z\subseteq S,$ such that for all $s\in Z$ we have that  $E_{s}\in \widetilde{\cA}.$
Due to the fact that $E_{s}\in \cT_{X,n}$ and $\cA=\cT_{X,n}\cap \widetilde{\cA}$, it follows that $E_{s}\in \cA$ for all $s\in Z$.
\edem

We start by proving the generic flatness property in the case of curves.
The following lemma follows from the same arguments of \cite[Lemma 4.7]{T08}, after replacing the $K3$ surface with a curve.

\bl \label{lemma_gen_flat_curves}
Let $C$ be a curve and $\sigma=(Z,\cA)\in \Stab(C)$.
Then $\cA$ satisfies generic flatness.
\el

\bdem
First note that by \cite[Theorem 2.7]{M07}, we have  $\cA=\mathcal{P}_{\sigma_\mu}((r,1+r])$ where $r=m+\theta$ with $\theta\in [0,1)$ and $m\in \ZZ,$ where  $\sigma_{\mu}$ is given by slope stability and $\mathcal{T}_{\theta}=\mathcal{P}_{\sigma_{\mu}}(\theta, 1]$ and $\mathcal{F}_{\theta}=\mathcal{P}_{\sigma_{\mu}}(0, \theta].$
Therefore, we get $\cA=\lin \mathcal{F}_{\theta}[m+1],\cT_{\theta}[m]\rin$.
It is enough to prove the statement for $m=0$.
Let $\mathcal{E}\in \cA_{S},$ we have that $\bR p_*(\cE\otimes q^*(L)^{n})\in \cA$ for $n>>0$.
Note that the cohomology of $\cE$ is concentrated in degree $-1,0$.

The spectral sequence $$E^{i,j}_{2}=\bR^{i} p_*(H^i(\mathcal{E})\otimes  q^*(L)^{n})\Rightarrow \bR^{i+j}(\cE\otimes q^*(L)^{n})\in \cA$$ degenerates for $n>>0$, from which it follows that $H^i(\cE)=0$ unless $i=-1,0$.
Then by \cite[Theorem 2.3.2]{HL10}, there is an open set $U\subseteq S$  and a filtration $$0=F^0\subseteq F^1\subseteq \cdots F^l=H^{-1}(\cE)_{U}$$ such that $F^{i+1}/F^{i}$ are $U$-flat for $i=1,\dots, l$.
Moreover, for $s\in S$, the filtration
\[0=F_s^0\subseteq F_s^1\subseteq \cdots \subseteq F_s^l=H^{-1}(\cE)_{s}\]
is precisely the HN-filtration of $H^{-1}(\cE)_{s}$ with respect to $\mu$-stability.
By \cite[Proposition 3.5.3]{AP06}, we have that there is dense set $S'\subseteq S$, such that $\cE_{s}\in \cA$ for $s\in S'$.
This implies that for every $s\in S'$ we get that $H^{-1}(\cE)_{s}\in \cF_{\theta}$.
Since $F^{i+1}/F^{i}$ are $U$-flat, we have that $[F_{s}^{i+1}/F_{s}^{i}]=[F_{s'}^{i+1}/F_{s'}^{i}]$ for $s\in U$ and $s'\in S'$ and that $\mu(F_{s}^{i+1}/F_{s}^{i})=\mu(F_{s'}^{i+1}/F_{s'}^{i})\leq -\cot(\pi\theta)$.
It implies that for all $s\in U,$ we get that $H^{-1}(\cE)_{s}\in \cF_{\theta}$.
Analogously for $H^0(\cE)$.
\edem 

\bl \label{Lemma_GFforrec}
Let $\cT$ be a triangulated category and $\cT=\lin D_1,D_2\rin$ a semiorthogonal decomposition.
Suppose there are hearts $\cA^j\subseteq D_j$ for $j=1,2$, satisfying generic flatness, then the heart $\cA=\rec(\cA^1,\cA^2)$ also satisfies generic flatness. 
\el

\bdem
Let $E\in \cA_{S}$.
By Proposition \ref{prop:rec_base_change}, we have that $\cA_{S}=\rec(\cA_{S}^1,\cA_{S}^2)$ and that ${i}_1^*(E)\in \cA_{S}^1,$ ${i}_2^!(E)\in {D^{\geq 0}_2}_{S}$ and ${i}_2^{*}(E)\in D^{\leq 0}_2,$ where $(D^{\leq 0}_2,D^{\geq 0}_2)$ is the t-structure with heart $\cA^2$.
From Lemma \ref{lemma_gen_flat_curves}, it follows that there are open sets $U_1$ and $U_2,$ such that   for all $s\in U_1 \textnormal{ we have } {i}_1^*(E)_{s}\in \cA^1$ and $\textnormal{ for all } s\in U_2 \textnormal{ we have } {i}_2^!(E)_{s}\in {D_2}^{\geq 0}$ and ${i}_2^*(E)\in {D_2}^{\leq 0}.$

Finally we define $U\coloneqq U_1\cap U_2$ and note that it follows directly from the definition of recollement that $E_{s}\in \cA$ for all $s\in U$.
\edem

\bc \label{corollary_GFgluing}
Let $\cA$ be a gluing heart with respect to the standard semiorthogonal decomposition $\cT_{C,n}=\lin D_1,\dots, D_n\rin $.
Then $\cA$ satisfies the generic flatness property.
\ec

\bdem
Since $\cA$ is a gluing heart, there exist hearts $\cA_j\subseteq D_j \cong D^b(C)$ for $j=1,\dots,n$ such that $\cA=\gl(\cA_1,\dots,\cA_n)\subseteq \cT_{C,n}$.
By Lemma \ref{lemma_gen_flat_curves}, we have that $\cA_j$ satisfies the generic flatness property.
Moreover, it follows from Remark \ref{rem_glu_rec} that if $\cT=\lin D_1,D_2\rin$ and two hearts $\cB^1\subseteq D_1 $ and $\cB^2\subseteq D_2$ satisfy gluing conditions, then $\gl(\cB^1,\cB^2)=\rec(\cB^1,\cB^2)$.
By the recursive construction of $\cA=\gl(\cA^1,\dots, \cA^n)$ and Lemma \ref{Lemma_GFforrec}, we obtain that $\cA$ satisfies the generic flatness property. \edem 

We will need the following result which follows from the same arguments given in \cite[Lemma 3.15]{T08}.

\bl \label{lem_3.15}
Let $\sigma=(Z,\cA)\in \Stab(\cT_{C,n})$ be an algebraic stability condition.
Assume that $\cM^{\beta,\phi}(\sigma)$ is bounded for all $\phi\in \RR$ and $\beta\in \mathbb{Z}^{2n}$.
Then for $\phi\in (0,1)$ and $G\in \cA$ the following set of 
\[Q(G,\phi) = \{ E\in \cA \,\, | \,\, \textnormal{there exists a surjection }  G\twoheadrightarrow E \in \cA \textnormal{ and } \phi(E) \leq \phi \} \]
is bounded in $D^b(Y_{C,n})$.
\el

We now assume that for an algebraic stability condition $\sigma=(Z,\cA)\in \Stab(\cT_{C,n})$ the set $\cM^{\beta,\phi}(\sigma)$ is bounded for all $\phi\in \mathbb{R}$ and $\beta\in \mathbb{Z}^{2n}$ and that $\cA$ satisfies generic flatness.
Let $E\in \cA_{S}$ be t-flat and take $\phi\in (0,1)$.
We consider the following functors
\[\Quot(E,\phi), \,\,\,\, \left(\Sub(E,\phi)\right)\colon (\Sch/S)\rightarrow \Sets\]
defined as follows:
A scheme $T$ over $S$ is mapped to pairs of the form $(F,E_{T}\rightarrow F)$ (respectively $(F,F\rightarrow E_{T})$) where $F\in\cM_{\pug}({\cT_{C,n}})(T)$ such that:
\begin{enumerate}
\item For each $t\in T,$ we have that $F_{t}\in \cA$ and $\phi(F_{t})\leq \phi$ (respectively $\phi(F_{t})\geq \phi$).
\item For each closed point $t\in T,$ the induced morphism $E_t\rightarrow F_t$ is surjective (respectively $F_t \rightarrow E_t$ injective) in $\cA$.
\end{enumerate}

\begin{remark}
These functors are a subspaces of the quot spaces defined in \cite[Definition 11.3]{BLMNPS20}.
It follows from \cite[Proposition 11.6]{BLMNPS20} that they are algebraic spaces.
\end{remark}

We prove the following proposition  \cite[Proposition 3.17]{T08}.
The proofs are essentially the same with some minor modifications.
In particular, we incorporate the techniques of \cite{BLMNPS20}.

\bp\label{prop_toda_3_17}
For any $\phi\in (0,1)$ there exist $S$-schemes $\cQ(E,\phi),\cS(E,\phi)$, of finite type over $S$, and $S$-morphisms
\begin{eqnarray*}
\cQ(E,\phi)&\rightarrow&\Quot(E,\phi) \enspace ,\\
\cS(E,\phi)&\rightarrow &\Sub(E,\phi)\enspace ,  
\end{eqnarray*}
which are surjective on $\CC$-valued points of $\Quot(E,\phi)$ and $\Sub(E,\phi)$.
\ep

\bdem
Let $E\in \cA_{S}$.
By Corollary \ref{cor_Lem2.6.2AP} we have that there is an object $H\in\cA$, some integer $n \in \ZZ$ and a surjection ${H}_S \otimes L^{-n}\twoheadrightarrow E$ in $\cA_{S}$, for $i=1,\dots,n$. 
Corollary \ref{cor_Li_rightexact} states that the functor $\bL i_*\colon (\cT_{C,n})_{S} \rightarrow \cT_{C,n}$ is right t-exact with respect to $\cA$ and therefore the kernel of $H\twoheadrightarrow E$ is also t-flat.
As a consequence, we obtain a morphism from $H_{s}\twoheadrightarrow E_{s} $ for each $s\in S$.

By Lemma \ref{lem_3.15} there is a $\CC$-scheme of finite type $Q$ and $F\in D^b(Y_{C,n}\times S)$, such that any object in $Q(H,\phi)$ is isomorphic to $F_{q}$ for some $q \in Q.$
As $\cA$ satisfies both generic flatness and the open heart property by Theorem \ref{thm:OHC}, the set $Q_1=\{q \in Q \,\, | \,\, F_{q}\in \cA\}$ is open.

Define $Q_2=Q_1\times S$.
By \cite[Lemma 8.9]{BLMNPS20} there is an open $U\subseteq Q_2$ such that 
\begin{eqnarray*}
\underline{\Hom}_{U}(E,F)\colon  (\Sch/U)^{\rm{op}} & \longrightarrow & (\Sets) \\
T &\longmapsto & \Hom_{D(X_{T})}(E_T,F_T)
\end{eqnarray*}
is representable by an affine scheme $Z_{U}$.

Once again there is an open set $V \subseteq Q_2 \setminus U$, such that the functor above over $V$ is representable by $Z_{V}$.
Recursively we construct a scheme $Q_3$ whose $\CC$-points are in bijection with the $\CC$-points of $Q_2$ and where the functor above over $Q_3$ is representable by $Z$, an affine scheme of finite presentation over $Q_3$.
Moreover, note that since $Q_2$, and therefore also $Q_3$, is of finite type over $S$, we have that $Z$ is of finite type over $S$.

Let us consider the universal family $E_{Z}\rightarrow F_{Z}$ and the triangle $K\rightarrow E_{Z}\rightarrow F_{Z}$ in $D^b(Y_{C,n}\times Z)$.
For $q\in Z,$ we have that $F_{q}\in\cA$.
As a consequence, the morphism $E_{q}\rightarrow F_{q}$ is surjective in $\cA$ if and only if $K_{q}\in \cA$.
Then, we define
\[\cQ(E,\phi)\coloneqq \{q\in Z \,\,| \,\, K_{q}\in \cA\}\]
which induces a morphism to $\Quot(E,\phi)$ that is surjective on $\CC$-valued points.
We get that $\cQ(E,\phi)$ is an open subscheme of $Z$.
Indeed, we apply the open heart property and generic flatness of $\cA$.

The arguments for $\Sub(E,\phi)$ are the same as in \cite[Proposition 3.17]{T08}.
\edem

\bp\label{prop_GFuptoaction} \cite[Proposition 3.18]{T08} 
Let $\sigma=(Z,\cA)\in \Stab(\cT_{C,n})$ be as in Proposition \ref{prop_toda_3_17}. If there is $g \in \GL$ such that $\sigma'=(Z',\cA')=\sigma \cdot g$ and $\sigma'$ is algebraic then $\cA'$ also satisfies the generic flatness property.
\ep

\bc
Let $\sigma=(Z,\cA)\in \Stab(\cT_{C})$ an algebraic stability condition, then $\cA$ satisfies the generic flatness property. 
\ec

\bdem
By Theorem \ref{them_classificationC_connected}, we have that either $\sigma$ is a gluing stability condition or $\sigma$ satisfies $(2)$ or $(3)$.
If $\sigma$ is a gluing stability condition, our statement follows from Corollary \ref{corollary_GFgluing}.

If $\sigma$ satisfies $(2)$, then there is a $g\in \GL$ such that $\sigma \cdot g$ is a gluing stability condition, in this case the result then follows from Proposition \ref{prop_GFuptoaction}.

If $\sigma$ satisfies $(3)$, then there is $g\in G$, such that $\sigma \cdot g=(Z',\cA')$ has the following property:
There is a non-gluing stability condition $\sigma''=(Z'',\cA'')$ satisfying $(2)$ with such that $\cA'=\cA''$. 
Since generic flatness is a property of the heart, we apply the same argument as before.
\edem

\subsection{Algebraic moduli stacks}

Following the strategy laid out in \cite{AP06}, we prove, using the open heart property, that generic flatness is sufficient for the algebraicity of the substack of Bridgeland semistable chains.

First we state the following lemma.
The proof is exactly the same as in \cite{T08}.

\bl \cite[Lemma 3.13]{T08}\label{lem_3.13T}
Let $\sigma=(Z,\cA)\in \Stab(\cT_C)$ be an algebraic stability condition.
For a smooth quasi-projective variety $S$ and $\cE \in \mathcal{M}_{\pug}(\cT_{C})(S)$, assume that the locus
\[S^{\circ} = \left\{s \in S \, | \, \cE_s \textnormal{ is of numerical type } \beta\in \ZZ^4 \textnormal{ and } \cE_s \in \mathcal{P}_{\sigma}(\phi)\right\}\]
for $\phi \in \mathbb{R}$ is not empty.
Then there is an open subset $U \subseteq S$ which is contained in $S^{\circ}.$
\el


We now state and prove the main result.

\bt\label{thm_main1}
Let $\sigma\in \Stab(\cT_C)$ be a stability condition, then $\mathcal{M}^{\beta,\phi}(\sigma)$ is an algebraic stack of finite type over $\CC$ for all $\beta\in \ZZ^4$ and $\phi\in \RR.$
\et

\bdem 
First assume that $\sigma$ is an algebraic stability condition.
Following standard arguments (see for example \cite[Lemma 3.6]{T08}), Lemma \ref{lem_3.13T} implies that $\mathcal{M}^{\beta,\phi}(\sigma)$ is an open substack of $\cM_{\pug}(\cT_{C})$.

By Corollary \ref{cor:boundedness_curve_triples}, we obtain that $\mathcal{M}^{\beta,\phi}(\sigma)$ is bounded.
Therefore $\mathcal{M}^{\beta,\phi}(\sigma)$ is an algebraic stack of finite type over $\mathbb{C}$.
See \cite[Lemma 9.7]{BLMNPS20}.

The result for a non-algebraic stability condition follows from the algebraic case proved above.
We omit the proof since it is exactly the same as in \cite[Proposition 3.20, Step 3]{T08}  which relies on the well-behaved wall and chamber decomposition, see \cite[Proposition 3.3]{BM11}.
\edem 

\bc
For every $\sigma=(Z,\cA)\in \Stab(\cT_C)$, we have that $\cA$ satisfies generic flatness. 
\ec

\bdem
An adapted version of the arguments of \cite[Proposition 4.12]{PT19} to our set up follows from Theorem \ref{thm_main1}, Corollary \ref{cor_Li_rightexact} and Corollary \ref{cor_Lem2.6.2AP}.
\edem

Under the assumption that $\sigma_{\alpha} \in \Stab^{\circ}(\cT_{C,n})$ satisfies the support property we obtain the following proposition.
Let $\Stab^{\circ}(\cT_{C,n})$ be the connected of $\sigma_{\alpha}.$

\bp \label{prop_algstack_chains} 
Let $\sigma=(Z,\cA)\in \Stab^{\circ}(\cT_{C,n})$ be a gluing algebraic stability condition.
Then $\cM^{\beta,\phi}(\sigma)$ is an algebraic stack of finite type over $\CC$ for all $\beta\in \ZZ^{2n}$ and $\phi\in \RR$.
\ep

\bdem
By Corollary \ref{cor_boundedness_chains}, we have that $\cM^{\beta,\phi}(\sigma)$ is bounded. 
Subsequently, $\cA$ satisfies the open heart property (Theorem \ref{thm:OHC}).
Moreover, Corollary \ref{corollary_GFgluing} tells us that $\cA$ satisfies the generic flatness property.
By Proposition \ref{prop_GFuptoaction}, for every $g \in \GL$, we also get that if $\sigma \cdot g=(Z',\cA')$ is algebraic then $\cA'$ also satisfies the generic flatness property.
As a consequence, we can prove the analogous to Lemma \ref{lem_3.13T} for this case. 
Therefore, the statement follows from \cite[Lemma 9.7]{BLMNPS20}. 
\edem

\brem 
Let $X$ be a smooth projective variety of $\dim(X)>1$ and assume there is an algebraic stability condition $\sigma=(Z,\cA)\in \Stab(X)$.
We then have that there is an algebraic gluing pre-stability condition $\tau=(W,\cB)=\gl(\sigma, \dots, \sigma)$ on $\cT_{X,n}$.
Moreover, if $\cA$ satisfies generic flatness, by Corollary \ref{corollary_GFgluing} we get that $\cB$ also satisfies generic flatness. 
By Theorem \ref{thm:OHC}, the heart $\cB$ also satisfies the open heart property.
Consequently, it is enough to prove the boundedness of the moduli stack $\cM^{\beta,\phi}(\tau),$ which is expected, in order to conclude that it is algebraic of finite type over $\CC$.
See \cite{T08} for $K3$ surfaces and \cite{PT19} for 3-folds.
\erem 

\subsection{Good moduli spaces}

We will now apply the groundbreaking result \cite{AHLH19} to show the existence of the good moduli spaces as defined by Alper \cite{A13}.

The main theorem of the section is the following.
\begin{theorem}\label{thm:good-moduli}
Consider the moduli stack $\mathcal{M}^{\beta,\phi}(\sigma)$ as in Theorem \ref{thm_main1} or as in Proposition \ref{prop_algstack_chains}.
Then $\mathcal{M}^{\beta,\phi}(\sigma)$ admits a good moduli space $M^{\beta,\phi}(\sigma)$ which is an algebraic space over $\CC$.
Moreover, $M^{\beta,\phi}(\sigma)$ is proper.
\end{theorem}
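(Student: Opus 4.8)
The plan is to apply the intrinsic criterion of Alper, Halpern-Leistner and Heinloth \cite{AHLH19} for the existence of a good moduli space which is separated and proper. By Theorem \ref{thm_main1} (respectively Proposition \ref{prop_algstack_chains}) the stack $\cM^{\beta,\phi}(\sigma)$ is an algebraic stack of finite type over $\CC$, and since it is an open substack of $\cM_{\pug}(\cT_C)$ (respectively $\cM_{\pug}(\cT_{C,n})$) it has affine diagonal, this being inherited from $\cM_{\pug}$, whose $\Hom$-spaces are affine because universally gluable objects have no negative self-extensions, see \cite[Section 9]{BLMNPS20}. By \cite[Theorem A]{AHLH19}, such a stack admits a separated good moduli space $M^{\beta,\phi}(\sigma)$, necessarily an algebraic space over $\CC$, if and only if it is $\Theta$-reductive and S-complete; and $M^{\beta,\phi}(\sigma)$ is proper over $\CC$ provided in addition $\cM^{\beta,\phi}(\sigma)$ satisfies the existence part of the valuative criterion, i.e.\ every map $\operatorname{Spec} K \to \cM^{\beta,\phi}(\sigma)$, with $K$ the fraction field of a discrete valuation ring $R$, extends after a finite extension of $R$ to a map $\operatorname{Spec} R \to \cM^{\beta,\phi}(\sigma)$.

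First I would verify $\Theta$-reductivity and S-completeness. The strategy is to transport the arguments of \cite{BLMNPS20} (carried out there for noetherian hearts in admissible subcategories of $D^b(X)$) to the heart $\cA\subseteq\cT_{C,n}$ by means of the geometric realisation $\cT_{C,n}\hookrightarrow D^b(Y_{C,n})$ of Theorem \ref{theorem_gr_X,n}. What this requires are precisely the base-change properties of noetherian hearts established above: the open heart property (Theorem \ref{thm:OHC}), generic flatness (Corollary \ref{corollary_GFgluing}, together with its $\GL$-orbit version Proposition \ref{prop_GFuptoaction}), the existence of surjections onto an $\cA_S$-family from a twist of a fixed object of $\cA$ (Corollary \ref{cor_Lem2.6.2AP}), and the right $t$-exactness of the restriction functors $\bL i^*_s$ (Corollary \ref{cor_Li_rightexact}). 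With these in hand, $\Theta$-reductivity reduces to showing that a filtration by phases of the generic fibre of a family over $\operatorname{Spec} R$ extends over the whole disc, which follows from the Harder--Narasimhan formalism together with the fact that, under the support property, $\cP_\sigma(\phi)$ is a finite-length abelian category; and S-completeness is a Langton-type semistable reduction performed inside $\cA$, whose termination is guaranteed by noetherianity of $\cA$ and by the discreteness of the set of attainable slopes coming from the support property.

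Next I would establish properness. By boundedness (Corollary \ref{cor:boundedness_curve_triples}, respectively Corollary \ref{cor_boundedness_chains}) the stack $\cM^{\beta,\phi}(\sigma)$ is of finite type, hence so is $M^{\beta,\phi}(\sigma)$; as the latter is already separated by the previous step, it remains to check it is universally closed, equivalently that $\cM^{\beta,\phi}(\sigma)$ satisfies the existence part of the valuative criterion above. This is again a Langton-type argument: starting from a family over the punctured disc with $\sigma$-semistable generic fibre, one performs elementary modifications within the abelian category $\cA_{\operatorname{Spec} R}$ of $R$-families — producing the modifications via Corollary \ref{cor_Lem2.6.2AP} and staying inside $\cA$ via Theorems \ref{thm_base_change_hearts} and \ref{thm:OHC} — until the closed fibre becomes $\sigma$-semistable, the procedure terminating after finitely many steps and a finite ramified base change thanks to boundedness. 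Finally, for a non-algebraic $\sigma\in\Stab(\cT_C)$ one reduces to the algebraic case exactly as in the proof of Theorem \ref{thm_main1}, using the wall-and-chamber decomposition of $\Stab(\cT_C)$.

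The step I expect to be the main obstacle is the Langton-type semistable reduction that underlies both S-completeness and the valuative criterion. One must check on the one hand that the elementary transformations can be carried out \emph{in families}, that is, that the relevant subobjects and quotients of an object $E\in\cA_{\operatorname{Spec} R}$ are again objects of $\cA_{\operatorname{Spec} R}$ — this is exactly where the base-change machinery of Theorems \ref{thm_base_change_hearts}, \ref{thm:OHC} and Corollaries \ref{cor_Li_rightexact}, \ref{cor_Lem2.6.2AP} is needed — and on the other hand that the modification procedure terminates, for which the support property and the noetherianity of $\cA$ are essential. By contrast, $\Theta$-reductivity and the passage from a non-algebraic to an algebraic stability condition should be routine given the results already in place.
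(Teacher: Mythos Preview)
Your overall strategy is correct and follows the same backbone as the paper (apply \cite{AHLH19}), but you take a longer route at both steps.

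For the existence of a separated good moduli space, the paper does not verify $\Theta$-reductivity and S-completeness by hand; it simply invokes \cite[Theorem~7.25]{AHLH19}, which is already tailored to moduli of Bridgeland-semistable objects and packages these checks once $\cM^{\beta,\phi}(\sigma)$ is known to be an algebraic stack of finite type over $\CC$. Your proposed direct verification would work but is unnecessary.

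For properness, the paper also reduces to the existence part of the valuative criterion via \cite[Theorem~A]{AHLH19}, but proves it (Proposition~\ref{prop:val-crit}) by a cleaner one-shot argument rather than an iterative Langton procedure inside $\cA_{\operatorname{Spec} R}$. The key trick is to pass to the ambient recollement heart $\widetilde{\cA}\subset D^b(Y_{X,n})$: one first extends the family $E_U$ to some $E_0\in\widetilde{\cA}_S$ using \cite[Lemma~3.2.1]{AP06} (available because $\widetilde{\cA}$ is a noetherian heart in a full $D^b(Y)$), then projects $E_0$ to $(\cT_{X,n})_S$ via the base-changed semiorthogonal decomposition, landing in $\cA_S$ by Proposition~\ref{prop:rec_base_change}; finally one quotients by the maximal $S$-torsion subobject to obtain a $t$-flat extension, and semistability of the closed fibre follows from \cite[Lemma~4.1.2]{AP06}. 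Thus the recollement heart $\widetilde{\cA}$ --- the same device used for the open heart property --- lets one import the Abramovich--Polishchuk extension and Langton lemmas wholesale, avoiding the termination analysis you flagged as the main obstacle. Your iterative approach should also succeed, but the paper's reduction to \cite{AP06} via $\widetilde{\cA}$ is shorter and sidesteps the in-family elementary-modification bookkeeping.
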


\bdem 
Since $\mathcal{M}^{\beta,\phi}(\sigma)$ is an algebraic stack of finite type over $\CC$, we can follow the same steps as in \cite[Theorem 7.25]{AHLH19}. 
We obtain that $\mathcal{M}^{\beta,\phi}(\sigma)$ admits a separated good moduli space $M^{\beta,\phi}(\sigma).$ 
To prove that $M^{\beta,\phi}(\sigma)$ is proper it suffices to prove the existence part of the valuative criteria for properness \cite[Theorem A]{AHLH19}, which follows from Proposition \ref{prop:val-crit} below.
\edem

The following is an analogous result to \cite[Proposition 4.1.1]{AP06} in our setting. 
Combining the following proposition with \cite[Theorem A]{AHLH19} we deduce the properness claimed in Theorem \ref{thm:good-moduli}.

\bp\label{prop:val-crit} Let $\sigma=(Z,\cA)\in \Stab(\cT_{X,n})$ be an algebraic stability condition. 
Let $S$ be a curve and $U\coloneqq S\setminus\{p\}$ where $p\in S$ is a closed point and $j\colon U\hookrightarrow S.$
Let $E_{U}\in \cA_{U}$ such that $E_{s}\in \mathcal{P}_{\sigma}(1)$ for all $s\in U.$
Then there is an $E\in \cA_{S}$ such that $j^*(E)=E_{U}$ and $E_{s}\in \mathcal{P}_{\sigma}(1)$ for all $s\in S$.
\ep

\begin{remark}
Proposition \ref{prop:val-crit} also appears in the more general relative setting in \cite[Lemma 21.22]{BLMNPS20}.
We include a proof as a pleasant application of our methodology.
\end{remark}

\bdem 
Consider the realisation $\cT_{X,n} \subset D^b(Y_{X,n})$ and the heart $\widetilde{\cA}\subset D^b(Y_{X,n})$ as constructed in Section 4.2 such that $\cA \subset \widetilde{\cA}$.
Then $\cA_{U}\subseteq \widetilde{\cA}_{U}.$
By \cite[Lemma 3.2.1]{AP06}, there is an object $E_{0}\in \widetilde{\cA}_{S}$, such that $j^*(E_{0})=E_{U}$.
We have a triangle $E_{2}\rightarrow E_{0} \rightarrow E_{1}$ induced by the semiorthogonal decomposition $D^b(Y_{X,n}\times S)=\lin (\cT_{X,n})^{\perp}_{S},(\cT_{X,n})_{S}\rin$.

By Proposition \ref{prop:rec_base_change}, we get that $E_2\in \cA_{S}$ and moreover $j^!(E_2)=j^!(E_{0})=E_{U}$.
As $\cA$ is Noetherian there is a maximal $S$-torsion subobject $F\subseteq E_{2} $ in $\cA_{S},$ with support $\{p\}$.
See \cite[Definition 6.3]{BLMNPS20} for the definition of $S$-torsion.

We define $E\coloneqq E_{2}/F\in \cA_{S}$.
Note that  $E$ is $S$-torsion free and therefore, by \cite[Lemma 6.12]{BLMNPS20}, it is t-flat.
By the same argument of \cite[Lemma 4.1.2]{AP06}, we obtain that $E_{s}\in \mathcal{P}_{\sigma}(1)$ for all $s\in S$.
\edem 

\renewcommand{\bibname}{References}
\bibliography{bib.bib}
\bibliographystyle{acm}

\end{document}